\def\edo{\end{document}}
\newtheorem{theorem}{Theorem}[section]
\newtheorem{proposition}[theorem]{Proposition}
\newtheorem{lemma}[theorem]{Lemma}
\newtheorem{remark}[theorem]{Remark}
\def\divv{{\rm div }}
\def\rrd{{\mathbb{R}^d}}
\def\ae{\mbox{ a.e. }}
\def\la{\left\{}
\def\ra{\right\}}
\def\calf{{\mathcal{F}}}
\def\calx{{\mathcal{X}}}
\def\call{{\mathcal{L}}}
\def\calu{{\mathcal{U}}}
\def\calp{{\mathcal{P}}}
\def\vsp{\vspace*{1,5mm}\\ }
\def\bk{\bigskip }
\def\mk{\medskip }
\def\sk{\smallskip }
\def\n{\noindent }
\def\dd{\displaystyle}
\def\barr{\begin{array}}
\def\earr{\end{array}}
\def\bit{\begin{itemize}}
\def\eit{\end{itemize}}
\def\FP{Fokker--Planck}
\def\r{{\rho}}
\def\1{^{-1}}
\def\E{{\mathbb{E}}}
\def\nn{{\mathbb{N}}}
\def\PP{{\mathbb{P}}}
\def\rr{{\mathbb{R}}}
\def\9{{\infty}}
\def\lbb{{\lambda}}
\def\wt{\widetilde}
\def\ov{\overline}
\def\vf{{\varphi}}
\def\oo{{\omega}}
\def\ooo{{\Omega}}
\def\pp{{\partial}}
\def\vp{{\varepsilon}}
\def\ff{\forall }
\def\({\left(}
\def\){\right)}
\def\<{\left<}
\def\>{\right>}
\title{\bf Mean field system:\\ the optimal control based approach} 
\author{{\bf Viorel Barbu}\thanks{Octav Mayer Institute of Mathematics of  Romanian Academy  and Al.I. Cuza University,   Ia\c si, Romania.  Email: vbarbu41@gmail.com}\ \thanks{{\it Funding:} This work was supported by a grant of Ministry of Research, Inovation and Digitalization, CNCS-UEFISCDI project PN-III-P4-PCE-2021-0006, within PNCDI III.}}
\date{}
\begin{document}
\maketitle
\begin{abstract}
\n The mean field game system arises in the construction of  Nash type strategies for large population dynamics games. It is a  nonlinear parabolic system in  $(0,\9)\times\rrd$ which consists of a nonlinear forward \FP\ equation coupled with a backward Hamilton--Jacobi equation. In this work, the mean field game system is treated by interpreting it as the Euler--Lagrange system cor\-res\-pon\-ding to a Bolza control problem governed by a li\-near \FP\ equation with the controller in the drift term. One obtains in this way the existence and uniqueness of strong solutions  in appropriate Sobolev spaces. This variational approach permits to treat more ge\-ne\-ral mean field game systems than that studied in literature and, in particular, that with multivalued nonlinearities in the \FP\ equation and discontinuous coupling functions.\sk\\
{\bf Keywords:} mean field game, \FP\ equation, stochastic differential equation, optimal control.\\ 
{\bf MSC Codes:} 60H15, 47H05, 47J05.\\
\end{abstract}

\section{Introduction}\label{s1}
Here we are concerned with the partial differential system (the mean field game system) \bk
 
\begin{equation}\label{e1.1}
\barr{c}
\dd\frac{\pp\r}{\pp t}-\nu\Delta\r+{\rm div}(\r H_q(t,x,\nabla p))=0,\ (t,x)\in(0,T)\times\rrd,\vsp 
\dd\frac{\pp p}{\pp t}+\nu\Delta p+H(t,x,\nabla p)=F(t,x,\r),\ (t,x)\in(0,T)\times\rrd,\vsp 
\r(0,x)=\r_0(x),\ p(T,x)+G(x,\r(T,x))=0,\ x\in\rrd,\earr  
\end{equation}where $\nu>0$, $H=H(t,x,q)$ is convex and continuous in $q\in\rrd,$ measurable in $(t,x)\in Q_T=(0,T)\times\rrd$ and $H_q$ is the subdifferential of the function \mbox{$q\to H(t,x,q)$.} The coupling functions $F\equiv F(t,x,r)=\pp_rg(t,x,r)$ and $G=G(x,r)$ are monotonically increasing in $r$ (eventually multivalued) and measurable in $(t,x)\in Q_T$.  This system is relevant in mean field games theory initiated by J.M.~Lasry and P.L.~Lions~\cite{9} and M.~Huang, P.E.~Caines and R.P.~Malham\'e~\cite{8}. (See also \cite{5}, \cite{6}, \cite{6a}, \cite{7a}, \cite{10} for other significant contributions.) 
Let us briefly present the standard mean field game model defined by the stochastic optimal control problem (see \cite{4})
\begin{equation}\label{e1.2}
\underset u{\rm Min} \Big\{\E\!\!\dd\int^T_0\!\!(L(t,X_t,u(t,X_t)){+}F(t,X_t,\r(t)))dt 
{+}\E G(X_T,\r(T));\, u\in\calu\Big\} 
\end{equation}
where the process $X_t$ is the solution to the stochastic differential equation
\begin{equation}\label{e1.3}
\barr{l}
dX_t=u(t,X_t)dt+\sqrt{2\nu}\,dW_t,\ t\in(0,T),\vsp 
X(0)=X_0,\earr\end{equation}
in a probability space $(\calf,\calf_t,\PP)$ with the $d$-dimensional Brownian motion $W_t$. Here $\r=\r(t,x)$ is the probability density of the law $\call_{X_t}$ of the process $X_t$ with respect to the Lebesgue measure and the control constraint set $\calu$ is a closed subset in the space of all real-valued measurable functions \mbox{$u:Q_T\to\rrd$.} 
The controlled SDE \eqref{e1.3} describes the dynamics of an individual player which aims to minimizing the cost functional \eqref{e1.2} and get so its own optimal strtegy $u=u(t,X_t)$. Let us assume first that $\rho(t)\equiv\rho(t,x)$ is fixed in the class $\calp$ of probability densities in $\rrd$ (see \eqref{e1.12a} below). Hence, \eqref{e1.2}--\eqref{e1.3} can be viewed as a stochastic optimal control problem which depends on the parameter $\r$ with the corresponding optimal value function $\psi=\psi^\r$
\begin{equation}\label{e1.4}
\barr{r}
\psi(t,x)=\dd\inf_u\Big\{\E\dd\int^T_t(L(s,X_s,u_s(s,X_s))+F(s,X_s,\r(s))ds\\
+\E G(X_T,\r(T)));\ u\in\calu\Big\}\earr\end{equation}
which is a generalized (eventual viscosity) solution to the Hamilton--Jacobi equation (see, e.g., \cite{7})
\begin{equation}\label{e1.5}
\barr{ll}
\dd\frac{\pp\psi}{\pp t}+\nu\Delta\psi-H(t,x,-\nabla\psi)+F(t,x,\r)=0&\mbox{ in }Q_T,\vsp 
\psi(T,x)=G(x,\r(T,x))&\mbox{ in }\rrd.\earr\end{equation}
An optimal controller $u=u^\r$  for \eqref{e1.4}, assuming that it exists, is formally given~by
\begin{equation}\label{e1.6}
u^\r(t,x)=H_q(t,x,-\nabla\psi(t,x)),\ (t,x)\in Q_T,\end{equation}
where, $H:Q_T\times\rr\to\rr$ is the Hamiltonian corresponding to the Lagrangean function $L$, that is,
\begin{equation}\label{e1.7}
H(t,x,q)=\sup\{q\cdot v-L(t,x,v);\,v\in\rrd\},\ (t,x)\in Q_T,\ u\in\rrd.\end{equation}
Moreover, the probability density $\wt\r=\wt\r^\r$ of the corresponding optimal process $\wt X=\wt X^\r$  is a distributional solution to the \FP\ equation
\begin{equation}\label{e1.8}
\barr{ll}
\dd\frac{\pp\wt\r}{\pp t}-\nu\Delta\wt\r+\divv(\wt\r H_q(t,x,-\nabla\psi))=0&\mbox{in }Q_T,\vsp 
\wt\r(0,x)=\r_0(x),&x\in\rrd,\earr\end{equation}
where $\r_0$ is the density probability of $X_0$. 
More precisely, the equivalence of equations \eqref{e1.3} and \eqref{e1.8} via the formula $\wt\rho\,dx=\call_{\wt x_t}$ is between the martingale solutions (or probability weak solutions) $\wt X^\rho$ to \eqref{e1.3} and the distributional solutions $\wt\rho$ to \eqref{e1.8} which are weakly $L^1(\rrd)$-continuous in $t$. (See, e.g., \cite{2}, \cite{3a}.) Hence, for each fixed $\rho$ each  player has an optimal strategy $(\wt u,\wt\rho)$ which is defined by system \eqref{e1.5}, \eqref{e1.6}, \eqref{e1.8}. The mean field strategy is to take the parameter $\rho$ as an  optimal distribution anti\-ci\-pa\-ted by all players and so $\rho=\wt\rho$. Then, system \eqref{e1.5}--\eqref{e1.8} for $(p=-\psi,\ \wt\rho=\rho)$ reduces to \eqref{e1.1} and the corresponding stra\-tegy for the mean field game \eqref{e1.2}--\eqref{e1.5} is a Nash-type equilibrium which, as mentioned above, amounts to saying that each player anticipates the other players choices  options.  

The existence and uniqueness of classical and weak solutions for \eqref{e1.1} were obtained in the pioneering work \cite{9} of Lasry and Lions and later on by A.~Poretta \cite{10} (see also \cite{4}, \cite{5}, \cite{6}). Most of these results are given on $(0,T)\times \pi^N$, where $\pi^d$ is the $d$-dimensional torus and for smooth coupling functions $H$, $F$ and~$G$.  The method used in the above works is based on fixed point theory combined with sharp existence results for linear parabolic equations. Here, we shall use a different approach to  the well-possedness of system \eqref{e1.1} by viewing it, and this observation arises first in Lasry and Lions work \cite{9}, as the optimality system corresponding to a deterministic optimal control problem governed by  the \FP\ equation. Namely,
\begin{equation}\label{e1.5a}
\underset u{\rm Min}\Big\{\!\dd\int_{Q_T}\!\!(L(t,x,u)\r
{+}g(t,x,\r))dtdx{+}\!\int_\rrd\!\! g_0(x,\r(T,x))dx,u\in L^1(Q_T)\!\Big\}
\end{equation}
\begin{equation}\label{e1.6a}
\barr{l}
\dd\frac{\pp\r}{\pp t}-\nu\Delta\r+\divv(u\r)=0\mbox{ in }Q_T,\vsp 
\r(0,x)=\r_0(x),\earr\end{equation}
where, $L$ is the Lagrangian function
\begin{equation}\label{e1.7a}
L(t,x,u)\equiv\sup\{q\cdot u-H(t,x,q)\}\end{equation}
and the functions $g:Q_T\times\rr\to\rr$, $g_0:\rrd\times\rr\to\rr$ are defined by
\begin{equation}\label{e1.8a}
\pp_r g(t,x,r)\equiv F(t,x,r);\ \pp_r g_0(x,r)\equiv G(t,x,r),\ \ff(t,x)\in Q_T;\ r\in\rr.\end{equation}
(One assumes here that $r\to F(\cdot,r)$ is a potential function on $\rrd$.) The main result of this work, Theorem \ref{t1}, amounts to saying that under appropriate conditions on $\r_0,$ $L$ and $g,$ $g_0$ to be made precise later on, the optimal control problem \eqref{e1.5a} has a  solution $(u,\r)$, where $u\in H_q(t,x,\nabla p)$, a.e. in $Q_T$,  and $(\r,p)$, where $p$ is the solution to the costate equation asso\-cia\-ted with \eqref{e1.6a}, is a solution to the mean field system \eqref{e1.1}. In particular, one obtains for $\r_0\in L^1(\rrd)$ such that $\r_0>0$, a.e. on $\rrd$ and $\log\r_0\in L^1_{\rm loc}$, the existence (and uniqueness) of a solution $(\r,p)$ to the mean field system \eqref{e1.1}.

These results can be compared most closely with that obtained by A.~Porretta \cite{10} (see also \cite{10a}), but the method involved here is different and the existence is obtained under more general conditions on the Hamiltonian $H$ and the functions $F$ and $G$ which, as seen later on, might be discontinuous. Compared with \cite{10}, one assumes here that the Hamiltonian function $H=H(t,x,q)$ is sublinear in $q$, which might seem restrictive but, taking into account the mean field game \eqref{e1.2}--\eqref{e1.3}, it is a natural one because this means that the control constraint set $\calu$ is of the form $\{u\in L^\9(Q_T)_;\,|u(t,x)|\le M$, a.e. $(t,x)\in Q_T\}$. 

As in other works on the mean field game system \eqref{e1.1}, the functions $F=F(t,x,q)$ and $G=G(t,x,r)$ are assumed monotonically nondecreasing in $q$ and $r$, respectively, but in this work they could be multivalued as well. (One should assume, however, that $F=F(\cdot,q)$ is a potential function,  that is, it is the subdifferential $\pp_q g(\cdot,q)$ of a continuous, convex function $q\to g(\cdot,q)$, a condition which in 1-$D$ is always satisfied for continuous and monotonically nondecreasing functions.) It should be emphasized  that taking into account the significance of system \eqref{e1.1} for the mean field game \eqref{e1.2}--\eqref{e1.3} the natural domain for it is $(0,T)\times\rrd=Q_T$. Of course, on bounded domains with Dirichlet boundary conditions the treatment is simpler but the first equation in \eqref{e1.1} cannot be interpreted as a \FP\ equation corresponding to a SDE of the form \eqref{e1.6a}. As regards the initial value $\rho_0$, taken into account its physical significance, it should be taking $L^1(\rrd)$ and more precisely in $\calp$.  

The variational approach used in this work involves two steps:\\ 1$^\circ$ the existence of a solution to the optimal control problem \eqref{e1.5a}--\eqref{e1.6a};\\ 2$^\circ$ the derivation of the maximum principle for this optimal control problem.\\ Both steps are quite delicate because the control system  \eqref{e1.6a} is bilinear in $(\r,u)$.  However, the main advantage of this approach is that, as usually happens in variational theory of PDEs, it provides smooth solutions (in Sobolev spaces) under relative weaker assumptions.

\bk\n{\bf Notation.} $L^p(\rrd),\ 1\le p\le\9$, also denoted $L^p$, is the space of Lebesgue measurable and $p$-integrable functions on $\rrd$ with the norm denoted $|\cdot|_p$. The scalar product in $L^2$ is denoted by $(\cdot,\cdot)_2$ and the corresponding norm by $|\cdot|_2$. We shall denote by $(L^p)^d$ the cor\-res\-pon\-ding space of vectorial $d$-dimensional functions. For $k\ge1$ and $p\in[1,\9]$, we denote by $W^{k,p}$   standard Sobolev spaces $W^{k,p}(\rrd)$ on $\rrd$. By $L^p_{\rm loc}$ and $W^{k,p}_{\rm loc}$ denote the corresponding local Sobolev spaces on $\rrd$. The same notations are used when $\rrd$ is replaced by $Q_T=(0,T)\times\rrd$. In the following, we shall simply write, for $1\le p\le\9$, $(L^p(Q_T))^d$ instead of $L^p(0,T;(L^p)^d)$. We set $H^1=W^{1,2}$ and denote by $H\1$ the dual space of $H^1$. Given a Banach space $\calx$, we  denote by $C([0,T];\calx)$ and $L^p(0,T;\calx)$ the spaces of $\calx$-valued continuous functions on $[0,T]$ and  $\calx$-valued  $L^p$-Bochner integrable  functions on $(0,T)$, respectively. We denote by 
 $W^{1,p}([0,T];\calx)$ the Sobolev space $\{y\in L^p(0,T;\calx);$  $\frac{dy}{dt}\in L^p(0,T;\calx)\}$ where $\frac{dy}{dt}$ is taken in the sense of $\calx$-valued vectorial distributions. We denote also by $W^{-1,p'}$, $\frac 1{p'}=1-\frac1p$ the dual of the space $W^{1,p}$ and we~set
\begin{equation}\label{e1.12a}
	\calp=\left\{\r\in L^1(\rrd);\ \r\ge0,\mbox{ a.e. in }\rrd,\ \int_\rrd\r\,dx=L\right\}.\end{equation}
If $f:\rrd\to\ ]-\9,+\9]$ is a {\it convex} and  {\it lower-semicontinuous} function, then  $\pp f(u)\subset\rrd$,   also denoted $f_u(u)$, is the  {\it subgradient} of $f$ in $u$, that is,
\newpage
\begin{equation}\label{e1.9}
\pp f(u)=\{\eta\in\rrd;\,f(u)\le f(v)+\eta\cdot(u-v),\ \ff v\in\rrd\}.\end{equation}
We denote by $f^*:\rrd\to\ ]-\9,+\9]$ the conjugate of $f$, that is,
\begin{equation}\label{e1.10}
f^*(v)=\sup\{u\cdot v-f(u);\ u\in\rrd\},\ v\in\rrd\},\end{equation}
and recall(see, e.g., \cite{1}, p.~84) that $\pp f^*=(\pp f)\1$ and
\begin{equation}\label{e1.11}
f(u)+f^*(\eta)=u\cdot\eta,\ \ff\eta\in\pp f(u),\ u\in\rrd.\end{equation}
 The mapping $\pp f:\rrd\to2^\rrd$ is the  {\it subdifferential} of $f$ and recall that it is a maximal monotone mapping (graph) in $\rrd\times\rrd$. 
 We denote by  $f'(u,v)$ the {\it directional derivative} of $f$ at $u$ in direction $v$, that is, 
 \begin{equation}\label{e1.17}
 	f'(u,v)=\lim_{\lbb\downarrow0}\frac1\lbb\,(f(u+\lbb v)-f(u)),\ \ff\,u,v\in\rrd,\end{equation}
 and note that (see, e.g., \cite{1}, p. 86)
 \begin{equation}\label{e1.18}
 	\pp f(u)=\{w\in\rrd;\ w\cdot v\le f'(u,v),\ \ff v\in\rrd.\}\end{equation}

\section{The controlled linear \FP\\ equation}\label{s2}
\setcounter{equation}{0}

We shall review here for later use a few standard results regarding the controlled \FP\ equation \eqref{e1.6a}, namely
\begin{equation}\label{e2.1}
\barr{ll}
\dd\frac{\pp\r}{\pp t}-\nu\Delta\r+\divv (u\r)=0,&\mbox{in }Q_T=(0,T)\times\rrd,\vsp 
\r(0,x)=\r_0(x),&x\in\rrd,\earr,\end{equation}
with the control input $u\in L^\9_{\rm loc}(Q_T)$ and $\r_0\in L^1$. The function $\r\in L^1_{\rm loc}(Q_T)$ is called {\it distributional solution} to \eqref{e2.1} if
\begin{equation}\label{e2.2}
\barr{r}
\dd\int_{Q_T}\r\(\frac{\pp\vf}{\pp t}+\nu\Delta\vf+u\cdot\nabla\vf\)dtdx+\int_\rrd\r_0(x)\vf(0,x)dx=0,\vsp
\ff\vf\in C^\9_0([0,T)\times\rrd).\earr\end{equation}
The function $\r$ is called a {\it variational solution} to \eqref{e2.1} if 
$$\r\in L^2(0,T;H^1)\cap W^{1,2}([0,T];H\1),$$ 
\begin{equation}\label{e2.3}
\barr{ll}
\dd\frac{d\r}{dt}-\nu\Delta\r+\divv(u\r)=0,&\mbox{ a.e. in }Q_T,\vsp 
\r(0,x)=\r_0(x),&x\in\rrd,\earr\end{equation}
where $\frac{d\r}{dt}\in L^2(0,T;H^{-1})$ is the strong derivative of $\r:[0,T]\to H\1$ or, equivalently, the $H^1$-valued distributional derivative of $\r$.

It is useful to note that if $\r\in L^2(0,T;H^1)\cap W^{1,2}([0,T];H\1)$, then $\r\in C([0,T];L^2)$ and

\begin{equation}\label{e2.4}\barr{r}
\dd\frac d{dt}\,(\r(t),\vf(t))_2=\raise-2,7mm\hbox{${}_{H\1}$}\!\!\(\frac d{dt}\,\r(t),\vf(t)\)_{\!\!H^1}
+\raise-2,7mm\hbox{${}_{H\1}$}\!\!\(\frac{d\vf}{dt}\,(t),\r(t)\)_{\!\!H^1},\vsp\mbox{ a.e. }t\in(0,T),\earr
\end{equation}
for all $\vf\in L^2(0,T;H^1)\cap W^{1,2}([0,T];H\1)$. Here,  ${}_{H\1}(\cdot,\cdot)_{H^1}$ is the duality pairing between $H^1$ and its dual space $H\1$ with $L^2$ as pivot space, that is, $H^1\subset L^2\subset H\1$ with continuous and dense embeddings. 
As regards the existence for \eqref{e2.1}, we have

\begin{proposition}\label{p1} Let $\r_0\in L^1\cap L^2$ and $u\in (L^\9(Q_T))^d$. Then \eqref{e2.1} has a unique strong solution $\r=\r^u$. Moreover, we have
\begin{eqnarray}
|\r(t)|^2_2+\int^t_0|\nabla\r(s)|^2_2ds&\le&C_T\exp(|u|^2_\9t)|\r_0|^2_2,\ \ff t\in[0,T],\label{e2.5}\\[1mm]
|\r(t)|_m&\le&C_T|\r_0|_m,\ \ff t\in[0,T],\  m=1,2.\label{e2.6}
\end{eqnarray}

If $\r_0\ge0$, a.e. in $\rrd,$ then $\r\ge0$, a.e. in $Q_T$ and if $\r_0\in\calp$, then $\r(t)\in\calp,$ $\ff t\in[0,T].$ 
If $\r_0(x)>0,$ a.e. $x\in\rrd$, and $\log\r_0\in L^1_{\rm loc}$, then
\begin{equation}\label{e2.7}
\r(t,x)>0,\ \mbox{a.e. }(t,x)\in Q_T,\ \log\r(t)\in L^1_{\rm loc},\ \ff t\in[0,T].\end{equation}
\end{proposition}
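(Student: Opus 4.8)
The plan is to treat \eqref{e2.1} as a linear parabolic Cauchy problem in the Gelfand triple $H^1\subset L^2\subset H^{-1}$ and to read off the stated estimates from suitable energy identities. First I would put \eqref{e2.1} in weak form: multiplying by $\varphi\in H^1$ and integrating by parts gives, for a.e. $t$,
\[
\frac{d}{dt}(\rho(t),\varphi)_2+\nu(\nabla\rho(t),\nabla\varphi)_2-(u(t)\rho(t),\nabla\varphi)_2=0 .
\]
This defines the time-dependent bilinear form $a(t;\rho,\varphi)=\nu(\nabla\rho,\nabla\varphi)_2-(u(t)\rho,\nabla\varphi)_2$ on $H^1\times H^1$, which is continuous (using $u\in(L^\infty(Q_T))^d$) and, via $|(u\rho,\nabla\rho)_2|\le|u|_\infty|\rho|_2|\nabla\rho|_2$ and Young's inequality, satisfies the G\aa rding inequality
\[
a(t;\rho,\rho)\ge\tfrac\nu2\,|\nabla\rho|_2^2-\tfrac{|u|_\infty^2}{2\nu}\,|\rho|_2^2 .
\]
By the standard Lions existence theorem for parabolic equations (equivalently, a Galerkin scheme closed by the bounds below) there is a unique $\rho\in L^2(0,T;H^1)\cap W^{1,2}([0,T];H^{-1})$, hence $\rho\in C([0,T];L^2)$ by \eqref{e2.4}, solving \eqref{e2.3}; uniqueness follows from linearity and the same coercivity applied to the difference of two solutions.

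For the estimates \eqref{e2.5}--\eqref{e2.6} I would test with $\varphi=\rho(t)$, legitimate by \eqref{e2.4}, to get $\tfrac12\tfrac{d}{dt}|\rho(t)|_2^2+\nu|\nabla\rho|_2^2=(u\rho,\nabla\rho)_2$; absorbing the right-hand side as above yields $\tfrac{d}{dt}|\rho|_2^2+\nu|\nabla\rho|_2^2\le\tfrac{|u|_\infty^2}{\nu}|\rho|_2^2$, and Gronwall's lemma followed by a time integration produces \eqref{e2.5}, hence \eqref{e2.6} for $m=2$. For $m=1$ I would test with a smooth monotone approximation $\eta_\varepsilon$ of $\mathrm{sgn}(\rho)$: since the equation is in divergence form the diffusion term contributes with a favourable sign and the drift term is supported on $\{|\rho|\le\varepsilon\}$ and vanishes as $\varepsilon\downarrow0$, giving $\tfrac{d}{dt}\int_{\rrd}|\rho(t)|\,dx\le0$ and so \eqref{e2.6} for $m=1$.

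For nonnegativity and the invariance of $\calp$ I would test with the negative part $\rho^-=\max(-\rho,0)$ (again via \eqref{e2.4} and the chain rule in the triple), obtaining $\tfrac12\tfrac{d}{dt}|\rho^-|_2^2+\nu|\nabla\rho^-|_2^2=(u\rho^-,\nabla\rho^-)_2$, whence by Young and Gronwall $|\rho^-(t)|_2^2\le e^{Ct}|\rho_0^-|_2^2$. If $\rho_0\ge0$ then $\rho_0^-=0$ and $\rho\ge0$ a.e. Mass conservation $\int_{\rrd}\rho(t)\,dx=\int_{\rrd}\rho_0\,dx$ (formally testing with $\varphi\equiv1$, rigorously via the $m=1$ computation) then gives $\rho(t)\in\calp$ whenever $\rho_0\in\calp$.

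\textbf{The delicate point is the positivity and log-integrability \eqref{e2.7}.} Here I would run an entropy estimate with the convex approximation $\Phi_\delta(r)=-\log(r+\delta)$, $\delta>0$, testing \eqref{e2.1} against $\Phi_\delta'(\rho)\zeta$ for a nonnegative cutoff $\zeta\in C_0^\infty(\rrd)$. The diffusion produces the good term $-\nu\int\Phi_\delta''(\rho)|\nabla\rho|^2\zeta\le0$ together with $\nu\int\Phi_\delta(\rho)\Delta\zeta$, while the drift produces terms absorbed using the crucial uniform bounds $\Phi_\delta''(r)r^2=r^2/(r+\delta)^2\le1$ and $|\Phi_\delta'(r)r|\le1$. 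Splitting $\Phi_\delta(\rho)=\log^-(\rho+\delta)-\log^+(\rho+\delta)$ and using $\log^+(\rho+\delta)\le\rho+\delta$ with the $L^1$ bound $|\rho(t)|_1\le|\rho_0|_1$ controls the $\Delta\zeta$ terms uniformly in $\delta$, leaving a Gronwall inequality for $t\mapsto\int\Phi_\delta(\rho(t))\zeta$ whose datum $\int\Phi_\delta(\rho_0)\zeta$ is finite and bounded in $\delta$ precisely because $\log\rho_0\in L^1_{\rm loc}$. Letting $\delta\downarrow0$ and invoking monotone convergence ($\Phi_\delta(\rho)\uparrow-\log\rho$ on $\{\rho\ge0\}$) gives $\int_K\log^-\rho(t)\,dx<\infty$ for every compact $K$; since $\log^+\rho(t)\le\rho(t)\in L^1_{\rm loc}$ this yields $\log\rho(t)\in L^1_{\rm loc}$ and in particular $\rho(t,x)>0$ a.e. The main obstacle is making the drift absorption and the $\Delta\zeta$ terms uniform in $\delta$ so that the limit survives; the probabilistic equivalence with the nondegenerate SDE \eqref{e1.3}, whose transition density obeys Gaussian lower bounds, offers an alternative route to strict positivity for $t>0$.
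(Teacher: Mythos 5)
Your proposal is correct and follows essentially the same route as the paper: the Lions variational framework in $H^1\subset L^2\subset H^{-1}$ for existence, the energy identity for \eqref{e2.5}, a truncated sign function for the $L^1$ bound, and, for \eqref{e2.7}, testing against $(\rho+\delta)^{-1}\zeta$ with a nonnegative cutoff and passing to the limit by Fatou/monotone convergence exactly as the paper does with $\varphi=(\rho+\vp)^{-1}\psi$. The only detail to tighten is mass conservation: the $m=1$ computation alone yields $|\rho(t)|_1\le|\rho_0|_1$, i.e.\ an inequality, so to get $\rho(t)\in\calp$ you should justify the test function $\varphi\equiv1$ by the paper's cutoff argument with $\psi_n(x)=\eta(|x|^2/n)$ and $|\nabla\psi_n|\le C/\sqrt{n}$.
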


\n(Here and everywhere in the following $|u|_\9=\|u\|_{L^\9(Q_T)}.$)
\begin{proof} It is convenient to write \eqref{e2.1} (equivalently \eqref{e2.3}) as the infinite dimensional Cauchy problem
\begin{equation}\label{e2.8}
\frac{d\r}{dt}+A(t)\r=0,\ t\in(0,T);\ \ \r(0)=\r_0,\end{equation}
where $A(t):H^1\to H\1$ is the linear continuous operator
\begin{equation}\label{e2.9a}
	{}_{H\1}(A(t)y,\vf)_{H^1}=\nu\int_\rrd\nabla y(x)\cdot\nabla\vf(x)dx+\int_\rrd y(x)u(t,x)\cdot\nabla\vf(x)dx,
	\end{equation}
$\ff\vf\in H^1.$ 
Then one gets the existence and uniqueness of a solution $\r\in L^2(0,T;H^1)\cap W^{1,2}([0,T];H\1)$ for \eqref{e2.7} by a standard existence result (see, e.g., \cite{1}, p.~64). Moreover, estimate \eqref{e2.5}  is immediate by \eqref{e2.4}. If $\r_0\ge0$, a.e. in $\rrd$, then it follows $\r\ge0$, a.e. in $Q_T$ by the maximum principle. 

We also have
\begin{equation}\label{e2.9b}
\int_\rrd\rho(t,x)dx=\int_\rrd\rho_0(x)dx,\ \ff t\in[0,T],
\end{equation}
from which it follows in particular that, if $\rho_0\in\calp$, then $\rho(t)\in\calp$, $\ff t\in[0,T]$. Indeed, by \eqref{e2.4} we have
$$\barr{l}
\dd\frac d{dt}\int_\rrd\rho(t,x)\psi(x)dx+\nu\int_\rrd\nabla\rho(t,x)\cdot\nabla\psi(x)dx\vsp\qquad
-\dd\int_\rrd u(t,x)\cdot\nabla\psi(x)\rho(t,x)dx=0,\mbox{ a.e. }t\in(0,T),
\earr$$
for all $\psi\in C^\9_0(\rrd)$. This yields
$$\barr{l}
\dd\int_\rrd\rho(t,x)\psi(x)dx+\nu\int^t_0 ds\int_\rrd\nabla\rho(s,x)\cdot\nabla\psi(x)dx\vsp\qquad
-\dd\int_\rrd ds\int_\rrd u(s,x)\cdot\nabla\psi(x)\rho(t,x)dx
=\dd\int_\rrd\rho_0(x)\psi(x)dx,\ 
\ff t\in(0,T).
\earr$$
We take $\psi(x)=\psi_n(x\equiv\eta\(\frac{|x|^2}n\)$, where $\eta\in C^1(\rr)$ is such that $\eta(r)=1$ for $0\le r\le1$, $\eta(r)=0$ for $r\ge2$. Taking into account that $|\nabla\psi_n(x)|\le\frac C{\sqrt{n}}$, $\ff x\in\rrd$, and that $u\in L^\9(Q_T)$, $\nabla\rho\in L^2(Q_T)$, we get
$$\left|\int_\rrd\rho(t,x))\psi_n(x)dx-\int_\rrd\rho_0(x)dx\right|\le\frac C{\sqrt{n}},\ \ff n.$$
This yields
$$\left|\int_{[|x|\le n]}\rho(t,x))dx-\int_\rrd\rho_0(x)dx\right|\le\frac C{\sqrt{n}},\ \ff n,$$
and so, letting $n\to\9$,  \eqref{e2.9b} follows.\newpage 

Assume now that $\log \r_0\in L^1_{\rm loc}$. To prove \eqref{e2.7}, we consider an arbitrary function $\psi\in C^\9_0(\rrd)$ such that $\psi\ge0$ on $\rrd$ and take in \eqref{e2.4} $\vf=(\rho+\vp)\1\psi$. We get 
$$\barr{l}
\dd\frac d{dt}\int_\rrd\psi(x)\log(\r(t,x)+\vp)dx
-\nu\dd\int_\rrd\frac{|\nabla\r(t,x)|^2}{(\r(t,x)+\vp)^2}\,\psi(x)dx\vspace*{3mm}\\ 
\qquad=-\nu\dd\int_\rrd\frac{\nabla\r(t,x)\cdot\nabla\psi(x)}{\r(t,x)+\vp}\,dx
-\dd\int_\rrd\frac{\r(t,x)u(t,x)\cdot\nabla\r(t,x)\psi(x)}{(\r(t,x)+\vp)^2}\,dx\vspace*{3mm}\\ 
\qquad+\dd\int_\rrd\frac{\r(t,x)u(t,x)\cdot\nabla\psi(x)}{\r(t,x)+\vp}\,dx,\mbox{ a.e. }t\in(0,T).\earr$$
This yields
$$-\!\dd\int_\rrd\!\!\psi(x)\log(\r(t,x){+}\vp)\le\!-\!\int_\rrd\!\!\psi(x)\log(\r_0(x){+}\vp)dx{+}C|u|_\9(|\psi|_1{+}|\nabla\psi|_1),$$
a.e. $t\in(0,T).$ We set $\ooo_t=\left\{x\in\rrd;\rho(t,x)\ge\frac12\right\}.$ By \eqref{e2.9a} it follows that $m(\ooo_t)\le2|\rho_0|_1$, $\ff t\in[0,T]$. (Here, $m$ is the Lebesgue measure.) This implies that for $0<\vp\le\frac12$, $\log(\rho(t,x)+\vp)\le0$, $\ff x\in\ooo_t$, $t\in(0,T)$, and so, since $\log\rho_0\in L^1_{\rm loc}$, by \eqref{e2.9b}  we have on $\ooo^c_t=\rrd\setminus\ooo_t$,
$$\int_{\ooo^c_t}|\log(\rho(t,x)+\vp)|\psi(x)dx\le C,\ \ff t\in[0,T].$$
Then, by Fatou's lemma it follows that
$$\int_{\ooo_t}|\log(\rho(t,x))|\psi(x)dx\le C(\rho_0), \ \ff t\in[0,T],$$
where
\begin{equation}\label{e2.11} 
	C(\rho_0)=\int_\rrd\psi(x)|\log(\rho_0(x))|dx+C_1|u|_\9(|\psi|_1+|\nabla\psi|_1).\end{equation}
Since $\log(\rho(t,\cdot))\in L^1(\ooo_t),$ we get
\begin{equation}\label{e2.12}
\int_\rrd\psi(x)|\log\r(t,x)|dx\le C(\rho_0),\mbox{ a.e. }t\in(0,T),
\end{equation}
and, therefore, $\log\r(t,x)\in L^1_{\rm loc}$, a.e. $t\in(0,T)$, as claimed. 

\n Estimate \eqref{e2.6} follows by  \eqref{e2.5} if $m=2.$ We multiply \eqref{e2.1} by $\calx_\delta(\rho)$, where
$$\calx_\delta(r)=\left\{\barr{cll}
1&\mbox{ if }&|r|\le\delta,\vsp
\dd\frac r\delta&\mbox{ if }&|r|<\delta,\earr\right.$$
and integrate on $(0,t)\times\rrd$. Taking into account that
$$\lim_{\lbb\to0}\int^t_0ds\int_\rrd(u(s,x)\cdot\nabla\r(s,x))\r(s,x)\calx'_\delta(\r(s,x))dx=0,$$
we get  
$$\ov{\lim_{\delta\to0}}\int_\rrd\calx_\delta(\rho(t,x))
\rho(t,x)dx\le\int_\rrd|\rho_0(x)|dx,\ \ff t\ge0,$$
which yields \eqref{e2.6} for $m=1$.\end{proof}

Proposition \ref{p1} can be extended to all $\rho_0\in L^1$. Namely,

\begin{proposition}\label{p2.2} Let $\rho_0\in L^1$ and $u\in (L^\9(Q_T))^d$. Then \eqref{e2.1} has a unique distributional solution $\rho$ which satisfies
\begin{eqnarray}
&\rho\in C([0,T];L^1)
\cap L^\beta(Q_T),
\label{e2.13}\\
&|\rho(t)|_m\le C|\r_0|_1(1+|u|_\9)t^{-\frac d2\(1-\frac1m\)},\   \ff t\in(0,T),\ m\in\mbox{$\left[1,\frac d{d-1}\right)$},\label{e2.14}\\
&|\rho(t)|_1\le|\rho_0|_1,\ \ff t\in[0,T],\label{e2.14a}
\end{eqnarray}
where \begin{equation}\label{e2.15a}
	\beta\in\left[1,\frac d{d-1}\right)\mbox{ if }d\ge2;\ \beta\in[1,3)\mbox{ if }d=1.
\end{equation}
Moreover, if $\rho_0\in\calp$, then $\rho(t)\in\calp$, $\ff t\in[0,T],$ and if $\log\rho_0\in L^1_{\rm loc}$, then \eqref{e2.7} holds.
\end{proposition}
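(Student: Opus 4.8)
The plan is to obtain $\rho$ as an $L^1$-limit of the strong solutions furnished by Proposition~\ref{p1} for integrable and square-integrable data, and to read off the regularity \eqref{e2.13}--\eqref{e2.14a} from a mild (Duhamel) representation combined with the contraction property of the flow. Concretely, I would fix a sequence $\rho_0^n\in L^1\cap L^2$ with $\rho_0^n\to\rho_0$ in $L^1$, chosen so as to preserve the structural features of $\rho_0$: the truncations $\rho_0^n=\rho_0\,\one_{[|\rho_0|\le n]}$ in the general case, and the monotone truncations $\rho_0^n=\min\{\rho_0,n\}$ when $\rho_0\ge0$, so that $\rho_0^n\uparrow\rho_0$, $\rho_0^n$ lies in $\calp$ after normalization, and $\log\rho_0^n=\min\{\log\rho_0,\log n\}$ keeps $|\log\rho_0^n|\le|\log\rho_0|$ whenever $\log\rho_0\in L^1_{\rm loc}$. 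Let $\rho^n=\rho^{u,\rho_0^n}$ denote the corresponding strong solutions given by Proposition~\ref{p1}.

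First I would establish \eqref{e2.14} at the level of the approximants. With $S(t)=e^{\nu t\Delta}$ the heat semigroup, \eqref{e2.1} is equivalent to the mild formulation
\[
\rho^n(t)=S(t)\rho_0^n-\int_0^t S(t-s)\,\divv\big(u(s)\rho^n(s)\big)\,ds .
\]
Using the kernel bounds $\|S(t)f\|_m\le Ct^{-\frac d2(1-\frac1m)}|f|_1$ and $\|S(\tau)\divv(g)\|_m\le C\tau^{-\frac12-\frac d2(1-\frac1m)}|g|_1$, together with $|u(s)\rho^n(s)|_1\le|u|_\infty|\rho^n(s)|_1\le|u|_\infty|\rho_0|_1$ (the $L^1$ contraction \eqref{e2.14a}, inherited from the $\calx_\delta$-truncation argument in the proof of Proposition~\ref{p1}), one obtains
\[
|\rho^n(t)|_m\le Ct^{-\frac d2(1-\frac1m)}|\rho_0|_1+C|u|_\infty|\rho_0|_1\int_0^t(t-s)^{-\frac12-\frac d2(1-\frac1m)}\,ds .
\]
The time integral converges precisely when $\frac12+\frac d2(1-\frac1m)<1$, i.e. when $m<\frac d{d-1}$ (all finite $m$ if $d=1$), which is exactly the range in \eqref{e2.14}; evaluating it produces a factor $t^{\frac12-\frac d2(1-\frac1m)}$, bounded on $[0,T]$ by $T^{1/2}t^{-\frac d2(1-\frac1m)}$, whence \eqref{e2.14}. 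Integrating \eqref{e2.14} in time with $m=\beta$ gives $\rho^n\in L^\beta(Q_T)$ uniformly in $n$, since $\int_0^T t^{-\frac d2(\beta-1)}\,dt$ converges exactly under \eqref{e2.15a} (for $d\ge2$ the binding constraint is $\beta<\frac d{d-1}$, while for $d=1$ it is the time-integrability $\beta<3$).

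Next I would pass to the limit. As \eqref{e2.1} is linear, $\rho^n-\rho^k$ is the strong solution with datum $\rho_0^n-\rho_0^k$, so \eqref{e2.14a} gives $|\rho^n(t)-\rho^k(t)|_1\le|\rho_0^n-\rho_0^k|_1$ for all $t$; hence $(\rho^n)$ is Cauchy in $C([0,T];L^1)$ and converges to some $\rho\in C([0,T];L^1)$ with $\rho(0)=\rho_0$. Passing to the limit in the distributional identity \eqref{e2.2} for $\rho^n$ shows $\rho$ is a distributional solution, and the uniform bounds \eqref{e2.14}, \eqref{e2.14a} and the membership $\rho\in L^\beta(Q_T)$ survive by lower semicontinuity of the norms. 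Positivity, the property $\rho(t)\in\calp$, and the logarithmic bound \eqref{e2.7} transfer from $\rho^n$ to $\rho$ thanks to the structure-preserving choice of $\rho_0^n$, by applying Fatou's lemma to the uniform estimates \eqref{e2.11}--\eqref{e2.12} along an a.e.-convergent subsequence.

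It remains to prove uniqueness, which I expect to be the main obstacle, since the solution is controlled only at the $L^1$ level and the estimates above are existence rather than comparison estimates. The difference $w=\rho_1-\rho_2$ of two distributional solutions in the stated class solves \eqref{e2.1} with $w(0)=0$. I would argue by duality: for arbitrary $\chi\in C^\infty_0(Q_T)$, solve the backward adjoint equation $\partial_t\varphi+\nu\Delta\varphi+u\cdot\nabla\varphi=\chi$ in $Q_T$, $\varphi(T,\cdot)=0$, a well-posed backward parabolic problem with the bounded drift $u$, and show that $\varphi$ is regular and decaying enough to be admissible in the distributional formulation of $w$; inserting it yields $\int_{Q_T}w\,\chi\,dtdx=0$ for all such $\chi$, whence $w\equiv0$. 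The delicate points are the bookkeeping of the singular kernel leading to the exact range \eqref{e2.15a}, and the justification that $\varphi$ (and its derivatives) has sufficient integrability and spatial decay to pair legitimately against a merely $L^\beta(Q_T)$ solution $w$; this is where the boundedness of $u$ is used decisively.
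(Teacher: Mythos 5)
Your proof follows essentially the same route as the paper: approximation of $\rho_0$ by $L^1\cap L^2$ data combined with the $L^1$-contraction \eqref{e2.14a} to get a Cauchy sequence in $C([0,T];L^1)$, the Duhamel representation with heat-kernel bounds to derive \eqref{e2.14} and the $L^\beta(Q_T)$ membership on the range \eqref{e2.15a}, and structure-preserving approximations of $\rho_0$ to transfer positivity, the $\calp$ property and \eqref{e2.7} to the limit. The only divergence is uniqueness, which the paper does not prove but delegates to \cite{3a}, whereas you sketch the standard backward-adjoint duality argument; that is the right idea and consistent with the cited reference, though as you note it still requires justifying the admissibility of the adjoint test function against a merely $C([0,T];L^1)$ solution.
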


\begin{proof} Let $\{\r^n_0\}\subset L^1\cap L^2$ be such that $\r^n_0\to\r_0$ strongly in $L^1$ as $n\to\9$. By \eqref{e2.6}, where $m=1$,we get for the corresponding solution $\r_n$ to \eqref{e2.1}
	$$|\r_n(t)-\r_m(t)|_1\le|\r_0^n-\r^m_0|_1,\ \ff t\in[0,T],$$ and so $\r_n\to \r$ in $C([0,T];L^1)$, where $\r$ is a distributional solution to \eqref{e2.1} (see \eqref{e2.2}). Moreover, if $\r_0\in\calp$, the sequence $\{\r^n_0\}$ can be chosen in such a way that $\r^n_0\in\calp$ as so $\r_n(t)\in\calp$, $\ff t\in[0,T]$. Hence, $\r(t)\in\calp$, $\ff t\in[0,T]$. Assume now that $\r_0(x)>0$, a.e. $x\in\rrd$ and that $\log\r_0\in L^1_{\rm loc}$. We choose $\r^n_0=\r_0\(1+\frac 1n\,\r_0\)\1\in L^1\cap L^2.$ We have\newpage
$$\log\r^n_0=\log\r_0-\log\(1+\frac1n\,\r_0\)\le\log\r_0,\mbox{ a.e. }$$
and so, by \eqref{e2.12} it follows that
$$\int_\rrd\psi(x)\log|\r_n(t,x)|dx\le C,\ \ff n,$$and so, for $n\to\9$ it follows that $\log\r(t,x)\in L^1_{\rm loc}(\rrd)$, $\ff t>0$, as claimed. 

To prove \eqref{e2.13}--\eqref{e2.14a}  we note that by \eqref{e2.1} we have
\begin{equation}\label{e2.14aa}
\barr{ll}
\r(t,x) 
 =\dd\int_\rrd E(t,x-\xi)\r_0(\xi)d\xi\vsp
\quad+\dd\int^t_0ds\int_\rrd(\nabla_x
E(t-s,x-\xi)\cdot u(s,\xi))\r(s,\xi)d\xi,\ \ff(t,x)\in Q_T.
\earr\end{equation}
where $E$ is the heat kernel
\begin{equation}\label{e2.15}
E(t,x)=(2\nu\sqrt{\pi})^{-d}t^{-\frac d2}\exp\(-\frac{|x|^2}{4\nu^2t}\),\ t>0,\ x\in\rrd.\end{equation}
By Young's inequality we have, for all $z\in L^q$,
\begin{equation}\label{e2.16}
\hspace*{-7mm}\barr{l}	
\quad\dd\int_\rrd E(t,x-\xi)z(\xi)d\xi|_m 
 \le\oo t^{\frac d2\(\frac1m-\frac1q\)}|z|_q,
\,1\le q\le m<\9, t>0,\vsp 
\dd\int_\rrd \nabla_x E(t,x-\xi)z(\xi)d\xi|_m 
 \le\oo t^{\frac d2\(\frac1m-\frac1q\)-\frac12}|z|_q,
\,1\le q\le m<\9, t>0,\earr\hspace*{-7mm}
\end{equation}
where
\begin{equation}\label{e2.16a}
\oo=(2\nu)^{-d-2}\pi^{-\frac d2}\int_\rrd\exp\(-\frac{|y|^2}{4\nu^2}\)(1+|y|)dy.\end{equation}
Then, by \eqref{e2.14aa} we get 
\begin{equation}\label{e2.16aa}
\hspace*{-7mm}	\barr{ll}
	|\r(t)|_m\!\!\!
	&\le\oo t^{\frac d2\(\frac 1m-1\)}|\r_0|_1
	+\oo\dd\int^t_0(t-s)^{\frac d2\(\frac1m-1\)-\frac12}|\r(s)u(s)|_1ds\vsp 
	&\le\oo|\r_0|_1t^{\frac d2\(\frac1m-1\)}
	+2\alpha(d(1-m)+m)\1\oo|\r_0|_1|u|_\9 t^{\frac d2\left(\frac1m-1\right)+\frac12},\vsp&\hfill \ff t\in(0,T),
	\earr\hspace*{-7mm}\end{equation}
which implies \eqref{e2.14} for $m\in\left[1,\frac d{d-1}\right)$. 
\begin{equation}\label{e2.17}
	\|\r\|_{L^\beta(Q_T)}
	\le C|\r_0|_1(1+T^{\frac12}|u|_\9).  
	\end{equation}
Then, for each $\beta$ satisfying \eqref{e2.15a} we have $\r\in L^\beta(Q_T)$ and so \eqref{e2.13} follows. 
As regards the uniqueness of distributional $\r\in C([0,T];L^1)$ to \eqref{e2.1}, it is well known (see, e.g., \cite{3a} in a more general setting).\end{proof}


\begin{remark}\label{r2.3}\rm By \eqref{e2.14} it follows that, if $d=1$, then $\r\in L^2(Q_T)\cap L^\9(\delta,T;L^2)$, $\ff\delta\in(0,T),$ and this yields
	$$\r\in L^2(\delta,T;H^1)\cap W^{1,2}(\delta,T;H\1),\ \ff\delta\in(0,T).$$
\end{remark}

\section{The main results}\label{s3}
\setcounter{equation}{0}

We consider here the optimal control problem \eqref{e1.4}  under the following hypotheses: 
\begin{itemize}
	\item [(i)] $L\equiv L(t,x,u):Q_T\times\rrd\to\rr$ is convex,   lower-semicontinuous in $u$, measurable in $(t,x)\in Q_T$, and
	\begin{eqnarray}
	&	L(t,x,u)=+\9,\mbox{ a.e. }(t,x)\in Q_T,\mbox{ for }|u|>a,\label{e3.1}\\[1mm]
&\hspace*{-5mm}\sup\{|L(t,x,u)|;|u|\le a\}\!\in\! L^2(Q_T), L(t,x,0)\!\in\! L^\9(Q_T)\cap L^1(Q_T).\quad   \label{e3.5}
	\end{eqnarray}
	\item[(ii)] $g\equiv g(t,x,q):Q_T\times\rr\to\rr$ is convex and continuous in $q\in\rr$, measurable in $(t,x)\in Q_T$, and 
	\begin{equation}\label{e3.2a}
		\limsup_{q\to\9}\frac{g^0_q(t,x,q)q}{|g(t,x,q)|}=C<\9\mbox{ uniformly in }(t,x)\in Q_T,
		\end{equation}
		\begin{eqnarray}
	&&g(t,x,q)\ge m_1(t,x)q,\ \ff(t,x,q)\in Q_T\times\rr,\label{e3.6}\\[1mm]
	&&\sup\{|g(t,x,q)|;\,|q|\le\mu\}=m_\mu(t,x),\ (t,x)\in Q_T,\ \ff\mu\in\rr,\qquad\label{e3.7}
	\end{eqnarray}where $m_1,m_\mu\in L^1(Q_T)$, and
	$$g^0_q(t,x,q)=\sup\{\eta\in\rrd;\ \eta\in g_q(t,x,q)\},\ \ff(t,x)\in Q_T,\ q\in\rrd.$$
	\item[(iii)] $g_0\equiv g_0(x,q):\rrd\times\rr\to\rr$ is convex and continuous in $q\in\rr$, measurable in $x\in\rrd$, and
	\begin{equation}
	\label{e3.8}m_2(x)q\le g_0(x,q)\le C_1|q|^2+C_2,\ae x\in\rr;\,q\in\rr
	\end{equation}
	where $m_2\in L^1\cap L^\9$, 
	 $C_1,C_2\in\rr,\ 
	C_1>0.$  
	\end{itemize}

\n If $H=H(t,x,q)$ is the conjugate of $u\to L(t,x,u)$, that is,
\begin{equation}\label{e3.9}
H(t,x,q)=\sup\{q\cdot u-L(t,x,u);\ u\in\rrd\},\ (t,x)\in Q_T,\ q\in\rr,\end{equation}
then hypothesis (i) is equivalent to the following one
\begin{itemize}
	\item [(i)$'$] $H:Q_T\times\rr\to\rr$ is convex and continuous in $q\in\rrd$, measurable in $(t,x)\in Q_T$ and
\begin{equation}\label{e3.10}
m_3(t,x)\le H(t,x,q)\le a|q|+C_3,\ \ff q\in\rr,\ (t,x)\in Q_T,\end{equation}where $m_3\in L^\9(Q_T)\cap L^1(Q_T)$. 
\end{itemize}
In the following, we shall denote by $H_q(t,x,q)$ the subdifferential of the function $q\to H(t,x,q)$, that is,
\begin{equation}\label{e3.11}
\barr{r}
H_q(t,x,q)=\{\zeta\in\rrd;\,\zeta\cdot(q-\bar q)\ge H(t,x,q)-H(t,x,\bar q);\,\bar q\in\rrd\},\vsp
(t,x)\in Q_T,\,q\in\rrd,\earr\end{equation}
and note that by \eqref{e3.10} we have
\begin{equation}\label{e3.12}
\sup\{|\zeta|;\,\zeta\in H_q(t,x,q)\}\le C_3,\ \ff q\in\rrd,\ (t,x)\in Q_T.\end{equation}
Similarly, we shall denote by $g_q$ and $(g_0)_q$ the subdifferentials of the functions $q\to g(t,x,q)$ and $q\to g_0(x,q)$, respectively.

We set
$$\Lambda=\left\{(\alpha,\beta);\alpha\in\left[1,\frac{d+2}{d+1}\right),  \beta\in\left[1,\frac d{d-1}\right)\mbox{ if }d>1;\, \beta\in[1,3)\mbox{ if }d=1\right\}.$$
Theorem \ref{t1} below is the main result.

\begin{theorem}\label{t1} Let $\r_0\in L^1$ be such that	
\begin{equation}\label{e3.13}
\rho_0(x)>0,\ \mbox{ a.e. }  x\in\rrd;\ \log \r_0\in L^1_{\rm loc},\ g(\cdot,\cdot,\r_0)\in L^1(Q_T). \end{equation}
Then, under hypotheses {\rm(i)--(iii)} there is an optimal solution $(u^*,\r^*=\r^{u^*})$ to problem \eqref{e1.5a}, which satisfies, for all $(\alpha,\beta)\in\Lambda$ and $m_0\in\left[1,\frac d{d-1}\right)$,
\begin{eqnarray}
&u^*\in (L^\9(Q_T))^d;\,\r^*(t,x)>0,\ \ae(t,x)\in Q_T,\label{e3.14}\\
&\r^*\in C([0,T];L^1)\cap L^\beta(Q_T),\label{e3.15}\\
&u^*(t,x)\in H_q(t,x,\nabla p(t,x)),\ae(t,x)\in Q_T,\label{e3.18}\\
&p\in L^\alpha(0,T;W^{1,\alpha}(\rrd)),
\label{e3.19}\\
&\dd\frac{\pp p}{\pp t}+\nu\Delta p+H(t,x,\nabla p)=\eta(t,x)\mbox{ in }Q_T,\label{e3.20}\\
&p(T,x)=-\eta_0(x),\ x\in\rrd,\nonumber\end{eqnarray}
\begin{eqnarray}
&\eta\in L^1(Q_T),\ \eta(t,x)\in g_\r(t,x,\r(t,x)),\ae(t,x)\in Q_T,\label{e3.21}\\
&\eta_0(x)\in-(g_0)_\r(x,\r(T,x)),\ae x\in\rrd;\ \eta_0\in L^{m_0},\label{e3.22}\\
&\rho^*(t)\in \calp,\ \ff t\in(0,T)\mbox{ if }\rho_0\in\calp.\label{e3.18a}
\end{eqnarray}
\end{theorem}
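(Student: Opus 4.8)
The plan is to carry out the two-step program announced in the introduction: first produce a minimizer $(u^*,\rho^*)$ of the Bolza problem \eqref{e1.5a}--\eqref{e1.6a} by the direct method, and then extract its first-order (Euler--Lagrange) system, which after a Fenchel substitution is precisely the mean field system written in costate form. The positivity and summability assertions \eqref{e3.14}, \eqref{e3.15}, \eqref{e3.18a} will then be free consequences of Proposition~\ref{p2.2} applied to the bounded optimal control $u^*$.

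\textbf{Existence of a minimizer.} First I would check the problem is nondegenerate: by \eqref{e3.13} and the bounds \eqref{e3.5}, \eqref{e3.7}, \eqref{e3.8} a reference control (say $u\equiv0$) yields finite cost, so $\inf J<\9$, while the one-sided bounds \eqref{e3.6}, \eqref{e3.8} together with the mass conservation \eqref{e2.14a} bound $J$ from below. Because of \eqref{e3.1} a minimizing sequence may be taken with $|u_n|\le a$ a.e., hence $u_n$ is bounded in $(L^\9(Q_T))^d$ and, along a subsequence, $u_n\rightharpoonup u^*$ weak-$*$; by Proposition~\ref{p2.2} the states $\rho_n=\rho^{u_n}$ are bounded in $C([0,T];L^1)\cap L^\beta(Q_T)$ with the quantitative estimates \eqref{e2.14}. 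The delicate point, as the author stresses, is the bilinear term $\divv(u_n\rho_n)$, for which weak-$*$ convergence of $u_n$ alone is not enough: I would extract \emph{strong} $L^1(Q_T)$ convergence of $\rho_n$ from the representation formula \eqref{e2.14aa} (it gives spatial equi-integrability and time-equicontinuity in $L^1$, hence Kolmogorov--Riesz/Arzel\`a--Ascoli compactness), so that $u_n\rho_n\rightharpoonup u^*\rho^*$ and $\rho^*=\rho^{u^*}$. Lower semicontinuity of the cost would be obtained by convexity: writing the running cost through the flux $m=u\rho$, the perspective integrand $(\rho,m)\mapsto\rho L(t,x,m/\rho)$ is jointly convex and l.s.c., while the $g$- and $g_0$-terms are l.s.c. by convexity; this makes $(u^*,\rho^*)$ optimal, and \eqref{e3.14}, \eqref{e3.15}, \eqref{e3.18a} follow from Proposition~\ref{p2.2}.

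\textbf{The optimality system.} To reach \eqref{e3.18}--\eqref{e3.22} I would regularize, replacing $L,g,g_0$ by smooth single-valued convex approximations (Moreau--Yosida), for which the corresponding control problem admits a classical Pontryagin system. Linearizing the state equation around the regularized optimum $u_\varepsilon$ gives the sensitivity $\zeta$ solving $\partial_t\zeta-\nu\Delta\zeta+\divv(u_\varepsilon\zeta+v\rho_\varepsilon)=0$, $\zeta(0)=0$; testing against the costate $p_\varepsilon$ defined by the backward linear equation with drift $u_\varepsilon$, source $L(u_\varepsilon)+(g_\varepsilon)_\rho(\rho_\varepsilon)$ and the appropriate terminal datum, the first variation collapses to $\int_{Q_T}\rho_\varepsilon\,v\cdot(\nabla p_\varepsilon+\partial_uL_\varepsilon(u_\varepsilon))\ge0$ for admissible $v$, whence $\partial_uL_\varepsilon(u_\varepsilon)=-\nabla p_\varepsilon$ on $\{\rho_\varepsilon>0\}$. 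By the Fenchel identity \eqref{e1.11} this is exactly $u_\varepsilon\in H_q(t,x,\nabla p_\varepsilon)$ in the sign convention $p=-\psi$ of the introduction, and substituting $u_\varepsilon\cdot\nabla p_\varepsilon=H(t,x,\nabla p_\varepsilon)+L(u_\varepsilon)$ into the linear costate equation converts it into the Hamilton--Jacobi form \eqref{e3.20}, with right-hand side in $(g_\varepsilon)_\rho$ and terminal inclusion in $(g_0)_\rho$.

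\textbf{Uniform estimates, passage to the limit, and the main obstacle.} The crucial a priori bound is parabolic regularity for the backward equation \eqref{e3.20} carrying an $L^1$ right-hand side $\eta_\varepsilon$ and bounded drift $u_\varepsilon$: a Boccardo--Gallou\"et type argument gives $p_\varepsilon$ bounded in $L^\alpha(0,T;W^{1,\alpha})$ for every $\alpha<\frac{d+2}{d+1}$, which is precisely the range in \eqref{e3.19} and in $\Lambda$. The growth condition \eqref{e3.2a} turns the $L^\beta$ bound on $\rho_\varepsilon$ into an $L^1(Q_T)$ bound on $\eta_\varepsilon\in(g_\varepsilon)_\rho$, and \eqref{e3.8} forces $(g_0)_\rho$ to grow linearly, so that \eqref{e2.14} at $t=T$ yields the $L^{m_0}$ bound on $\eta_{0,\varepsilon}$; thus \eqref{e3.21} and \eqref{e3.22} survive the limit. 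Letting $\varepsilon\to0$ produces weak limits $p,\eta,\eta_0$, and I expect the hard part to be upgrading to \emph{strong} convergence of $\nabla p_\varepsilon$, which is needed to identify the weak limits in the nonlinear term $H(t,x,\nabla p_\varepsilon)$ and in the multivalued inclusions $u_\varepsilon\in H_q(\nabla p_\varepsilon)$ and $\eta_\varepsilon\in(g_\varepsilon)_\rho(\rho_\varepsilon)$. This I would handle through the maximal monotonicity of the graphs $H_q$, $g_\rho$ and $(g_0)_\rho$ (a Minty/weak--strong lower semicontinuity argument) combined with the compactness furnished by the $W^{1,\alpha}$ bound --- exactly the spot where the bilinearity of the controlled \FP\ equation makes the identification of the limits most subtle.
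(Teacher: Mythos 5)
Your overall architecture (direct method, then a regularized Pontryagin system, then $L^\alpha(0,T;W^{1,\alpha})$ estimates and a limit passage) matches the paper's, and several of your substitutions are legitimate alternatives: the perspective-function argument for lower semicontinuity of the running cost is exactly the paper's, and a Boccardo--Gallou\"et type estimate is a reasonable replacement for the paper's heat-kernel fixed-point proof of Lemma \ref{l2}. But there is one genuine gap that your plan cannot close as written. You regularize $L$, $g$, $g_0$ and pass to the optimality system of the regularized problems, yet you include no mechanism forcing the regularized optimal controls $u_\vp$ to converge to the particular minimizer $u^*$ you constructed --- let alone to converge strongly. The paper's device is the \emph{adapted} problem \eqref{e4.5}, which adds the penalization $\frac12\int_{Q_T}\vf\,\r^u|u-u^*|^2\,dtdx$; combining $\int_{Q_T}\vf\,\r_\vp|u_\vp-u^*|^2\,dtdx\to0$ with the positivity of $\r_\vp$ guaranteed by $\log\r_0\in L^1_{\rm loc}$ (Proposition \ref{p1}) yields $u_\vp\to u^*$ a.e.\ and in $(L^m_{\rm loc}(Q_T))^d$. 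That strong convergence is what identifies the limits in the bilinear term $\divv(u_\vp\r_\vp)$, in the inclusion $u_\vp\in H_q(t,x,\nabla p_\vp+\vf(u^*-u_\vp))$, and in $H(t,x,\nabla p_\vp)$ via the Fenchel equality $L(t,x,u_\vp)=u_\vp\cdot\nabla p_\vp-H(t,x,\nabla p_\vp)$. Your proposed substitute --- a Minty argument using only weak convergence of $\nabla p_\vp$ and weak-$*$ convergence of $u_\vp$ --- does not close, because the cross term $u_\vp\cdot\nabla p_\vp$ is a product of two weakly convergent sequences and need not converge to $u^*\cdot\nabla p$; maximal monotonicity of $H_q$ alone gives nothing without a $\limsup$ control on that product. (Note also that the paper never regularizes $L$: it works with the directional derivative $L'(\cdot;u_\vp,v)$ and the characterization \eqref{e1.18}, which keeps the constraint $|u|\le a$ exact.)

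A second, smaller gap: for \eqref{e3.21} you need $\eta_\vp=g'_\vp(t,x,\r_\vp)$ to converge \emph{weakly in} $L^1(Q_T)$, and a uniform $L^1$ bound is not sufficient for that (the limit could be a measure). The paper's Lemma \ref{l1} proves equi-uniform integrability via the conjugate function $g^*_\vp$, the Fenchel inequality and the growth condition \eqref{e3.2a}, and only then invokes the Dunford--Pettis theorem; you should incorporate that step rather than stopping at boundedness.
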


\n We note that the solution $p$ to the backward Cauchy problem \eqref{e3.20} is taken in the  distributional sense \eqref{e2.2}. However, since by \eqref{e3.19} we have, for \mbox{$\frac1{\alpha^*}=1-\frac1\alpha,$}
$$\Delta p\in L^\alpha(0,T;W^{-1,\alpha^*}),\ H(t,x,\nabla p)\in L^\alpha(Q_T),\ \eta\in L^1(Q_T),$$it follows that
$$\frac{\pp p}{\pp t}\in L^\alpha(0,T;\calx),\ \calx=W^{-1,\alpha^*}+L^\alpha+L^1,$$
and, therefore, \eqref{e3.20} holds, a.e. $t\in(0,T),\ x\in\rrd.$  

As regards the uniqueness of the optimal pair $(u^*,\r^*)$ in problem \eqref{e1.5a},  we~have 

\begin{theorem}\label{t2} Let $(\alpha,\beta)\in\Lambda$, $m_0\in\left[1,\frac d{d-1}\right)$, and
	$\alpha^*=\frac\alpha{\alpha-1}$.   Assume further in Theorem {\rm\ref{t1}} that $\r_0\in L^{\alpha^*}$ and that 
	the function $q\to g(t,x,q)$ is strictly convex and	
\begin{equation}\label{e3.23}
\limsup_{q\to\9}\frac{g^0_q(t,x,q)}{|q|^{\alpha^*}}=C<\9\mbox{ uniformly in }(t,x)\in Q_T.\end{equation}
Then, there is a unique solution $u^*$ to   problem \eqref{e1.5a}  and the system
\begin{equation}\label{e3.24}
\barr{ll}
\dd\frac{\pp\r}{\pp t}-\nu\Delta\r+\divv(\r u^*)=0&\mbox{ in }Q_T,\vsp
\dd\frac{\pp p}{\pp t}+\nu\Delta p+H(t,x,\nabla p)=\eta&\mbox{ in }Q_T,\vsp 
\r(0)=\r_0,\ p(T,x)=-\eta_0,&\mbox{ in }\rrd,\earr
\end{equation}
\begin{equation}\label{e3.25}
u^*\in H_q(t,x,\nabla p),\ae\mbox{in }Q_T,\end{equation}
\begin{equation}\label{e3.26}
\eta\in g_\r(t,x,\r),\ae\mbox{in }Q_T;\ \eta_0\in(g_0)_\r(x,\r(T,x))\mbox{ in }\rrd,\end{equation}
has a unique solution $(\r,p)$ which satisfies \eqref{e3.15}--\eqref{e3.19}, namely,
\begin{eqnarray}
&\r\in C([0,T];L^1)\cap L^\beta(Q_T),\
\r>0,\mbox{ a.e. on }Q_T,\label{e3.27}\\
& p\in L^\alpha(0,T;W^{1,\alpha}(\rrd));\ \eta\in L^1(Q_T),\ \eta_0\in L^{m_0}.\label{e3.28}
\end{eqnarray} 
\end{theorem}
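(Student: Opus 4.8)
The plan is to treat the two uniqueness statements in turn: the uniqueness of the minimizing control $u^*$ of \eqref{e1.5a}, and the uniqueness of the pair $(\r,p)$ solving the optimality system \eqref{e3.24}--\eqref{e3.26}, the latter by the Lasry--Lions monotonicity identity. For the density I would pass to the variables $m=\r$ and $w=\r u$, under which the state constraint \eqref{e3.24} becomes the \emph{linear} equation $\pp_t m-\nu\Delta m+\divv w=0$, $m(0)=\r_0$, and the cost \eqref{e1.5a} turns into
$$\widetilde J(m,w)=\int_{Q_T}\Big(m\,L\big(t,x,\tfrac wm\big)+g(t,x,m)\Big)dtdx+\int_\rrd g_0(x,m(T))dx.$$
The perspective map $(m,w)\mapsto m\,L(t,x,w/m)$ is jointly convex on $\{m>0\}$ and $g,g_0$ are convex in $m$, so $\widetilde J$ is convex on the affine constraint set; the newly assumed \emph{strict} convexity of $q\mapsto g(t,x,q)$ makes it strictly convex in $m$, which forces the optimal density $\r^*=m^*$ to be unique. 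Since $\r^*>0$ a.e. by \eqref{e3.14}, the optimal flux $w^*=\r^*u^*$, and with it $u^*$, is then determined through the optimality conditions.

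For the system itself, suppose $(\r_1,p_1)$ and $(\r_2,p_2)$ both solve \eqref{e3.24}--\eqref{e3.26}, with selections $u_i\in H_q(t,x,\nabla p_i)$, $\eta_i\in g_\r(t,x,\r_i)$, $\eta_{0,i}\in(g_0)_\r(x,\r_i(T))$. I would study
$$\Phi(t)=\int_\rrd(p_1-p_2)(\r_1-\r_2)\,dx.$$
Differentiating by the duality rule \eqref{e2.4} and substituting the two equations of \eqref{e3.24}, the second--order contributions $-\nu\int(\Delta(p_1-p_2))(\r_1-\r_2)$ and $\nu\int(p_1-p_2)\Delta(\r_1-\r_2)$ cancel after integration by parts, and the transport term is moved onto $\nabla(p_1-p_2)$, leaving
$$\Phi'(t)=\int_\rrd(\eta_1-\eta_2)(\r_1-\r_2)\,dx+\int_\rrd\r_1\,\delta_1\,dx+\int_\rrd\r_2\,\delta_2\,dx,$$
where $\delta_1=u_1\cdot\nabla(p_1-p_2)-\big(H(\nabla p_1)-H(\nabla p_2)\big)$ and $\delta_2=u_2\cdot\nabla(p_2-p_1)-\big(H(\nabla p_2)-H(\nabla p_1)\big)$. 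The subgradient inequalities for $u_i\in H_q(\nabla p_i)$ give $\delta_1,\delta_2\ge0$ pointwise; as $\r_i\ge0$ and $g_\r$ is monotone, all three integrals are nonnegative, so $\Phi$ is nondecreasing on $[0,T]$.

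The boundary values then close the argument. Since $\r_1(0)=\r_2(0)=\r_0$ we have $\Phi(0)=0$, while $p_i(T)=-\eta_{0,i}$ gives $\Phi(T)=-\int_\rrd(\eta_{0,1}-\eta_{0,2})(\r_1(T)-\r_2(T))dx\le0$ by monotonicity of $(g_0)_\r$. Hence $0=\Phi(0)\le\Phi(T)\le0$, so $\Phi\equiv0$ and $\Phi'\equiv0$, forcing every nonnegative term above to vanish. In particular $\int(\eta_1-\eta_2)(\r_1-\r_2)=0$ a.e. in $t$, and the \emph{strict} monotonicity of $g_\r$ (from the strict convexity of $g$) yields $\r_1=\r_2$ a.e. in $Q_T$. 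The vanishing of the $\delta_i$ terms gives equality in the subgradient inequalities, i.e. $u_1,u_2\in H_q(\nabla p_1)\cap H_q(\nabla p_2)$; combined with the uniqueness of $u^*$ and the Fenchel relation $H=L^*$ (so that $\nabla p_i$ lies in the subdifferential $\pp_u L(u^*)$), one identifies $\nabla p_1=\nabla p_2$, after which the backward equation in \eqref{e3.24} together with the terminal condition gives $p_1=p_2$.

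The main obstacle is rigor at the stated low regularity. With only $p\in L^\alpha(0,T;W^{1,\alpha})$, $\r\in C([0,T];L^1)\cap L^\beta(Q_T)$ and $\eta\in L^1(Q_T)$, neither $(p_1-p_2)(\r_1-\r_2)$ nor $(\eta_1-\eta_2)(\r_1-\r_2)$ need be integrable and \eqref{e2.4} need not apply. This is exactly what the extra hypotheses buy: $\r_0\in L^{\alpha^*}$ propagates to an $L^{\alpha^*}$-bound on $\r$, and the growth condition \eqref{e3.23} controls $\eta$ in $L^\alpha$, so that every pairing closes through the conjugacy $\tfrac1\alpha+\tfrac1{\alpha^*}=1$; the differentiation of $\Phi$ and the integrations by parts on $\rrd$ must then be justified by the cutoffs $\psi_n$ of Proposition \ref{p1} and a time regularization before passing to the limit. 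The genuinely delicate point, however, is the identification of $p$ (and of the selection $u^*$) when $H$ is merely convex, so that $H_q$, $g_\r$ and $(g_0)_\r$ may be multivalued: there the uniqueness of $\r^*$, the optimality conditions and the duality $H=L^*$ must be combined carefully to pin down $\nabla p$ and the terminal trace.
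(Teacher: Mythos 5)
Your proposal follows essentially the same route as the paper: Section 5 of the paper is exactly your duality computation, written in integrated form (equations \eqref{e5.8}--\eqref{e5.9}) after smoothing with $(I-\vp\Delta)^{-1}$ rather than via the pointwise derivative of $\Phi(t)$ and cutoffs, and it uses $\r_0\in L^{\alpha^*}$ together with \eqref{e3.23} to make the pairings $\r_i\nabla p_j$ and $\r\eta$ integrable, precisely as you indicate, before concluding $\int_{Q_T}\r\eta\,dtdx\le0$ and invoking strict monotonicity of $g_\r$. The only substantive difference is that the paper's argument stops at $\r_1=\r_2$, leaving the identification of $p$ and of $u^*$ implicit (cf.\ Remark \ref{r3.4}), whereas you additionally sketch the uniqueness of $u^*$ through the joint convexity of $(m,w)\mapsto m\,L(t,x,w/m)$ and flag honestly the delicate multivalued-$H_q$ issue in recovering $\nabla p_1=\nabla p_2$ --- a point the paper does not address either.
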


Theorem \ref{t1} and Theorem \ref{t2} can be reformulated in terms of the  mean field system \eqref{e1.1}, where $H$ satisfies assumption (i)$'$ and $F:Q_T\times \rr\to2^\rr$, $G:\rrd\times\rr\to2^\rr$ are the multivalued mappings
$$F(t.x.q)\equiv g_q(t,x,q),\ \ G(x,q)\equiv (g_0)_q(x,q).$$Namely, we have

\begin{theorem}\label{t3} Let $\r_0$ satisfy \eqref{e3.13}. Then, under hypotheses {\rm(i)$'$, (ii), (iii)} there is a  solution $(\r,p)$ to system \eqref{e1.1} which satisfies  	
\begin{eqnarray}
&\r\in C([0,T];L^1)\cap L^\beta(Q_T),\label{e3.29}\\
&p\in L^\alpha(0,T;W^{1,\alpha}(\rrd)).
\label{e3.30}
\end{eqnarray}
More precisely, we have
\begin{equation}\label{e3.31}
\barr{ll}
\dd\frac{\pp\r}{\pp t}-\nu\Delta\r+\divv(\r\zeta)=0&\mbox{ in }Q_T,\vsp
\dd\frac{dp}{dt}-\nu\Delta p+H(t,x,\nabla p)=\eta&\mbox{ in }Q_T,\vsp
\r(0)=\r_0,\ p(T)=-\eta_0 &\mbox{ in }\rrd,\earr
\end{equation}
where $\eta\in L^1(Q_T),\ \zeta\in (L^\9(Q_T))^d,\ \eta_0\in L^{m_0}$, and
\begin{eqnarray}
&\!\!\!\!\!\!\!\zeta(t,x)\in H_q(t,x,\!\nabla p(t,x)),\, \eta(t,x)\in F(t,x,\r(t,x)),\mbox{\,a.e.\,}(t,x)\in Q_T,\  \label{e3.32}\\
&\eta_0(x)\in G(x,\r(T,x)),\ae x\in\rrd.\label{e3.33}
\end{eqnarray}
The solution $(\r,p)$ is unique if $\r_0\in L^{\alpha^*}$ and  the map $q\to F(t,x,q)$ is strictly monotone and condition \eqref{e3.23} holds. 
\end{theorem}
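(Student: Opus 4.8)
The plan is to obtain Theorem~\ref{t3} as a direct transcription of Theorems~\ref{t1} and~\ref{t2}, once we recognize that the mean field system \eqref{e1.1} is precisely the optimality system (state equation plus Pontryagin maximum principle) for the Bolza control problem \eqref{e1.5a}--\eqref{e1.6a}. The bridge is the identification \eqref{e1.8a} read backwards: given $H$ satisfying (i)$'$ and the monotone (possibly multivalued) maps $F$, $G$, we recover the Lagrangian as $L=H^*$, so that (i) holds by the stated equivalence (i)$\Leftrightarrow$(i)$'$, and we choose convex potentials $g$, $g_0$ with $g_q=F$ and $(g_0)_q=G$. These potentials exist precisely because $F(\cdot,\cdot,q)$ and $G(x,\cdot)$ are subdifferentials of convex functions, which is exactly the standing potentiality assumption on $F$. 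Under this dictionary, hypotheses (i)$'$, (ii), (iii) for the mean field data become (i), (ii), (iii) for the control data, so Theorem~\ref{t1} applies with $\r_0$ as in \eqref{e3.13}.

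First I would invoke Theorem~\ref{t1} to produce $(u^*,\r^*)$ together with the costate $p$, the selections $\eta$, $\eta_0$, and the regularity \eqref{e3.15}, \eqref{e3.19}. Setting $\zeta:=u^*$, the inclusion \eqref{e3.18} gives $\zeta\in H_q(t,x,\nabla p)$ a.e., which is the first half of \eqref{e3.32}; moreover the a priori bound \eqref{e3.12} on the subdifferential of $H$ forces $\zeta\in(L^\9(Q_T))^d$, so that the state equation \eqref{e1.6a} driven by $u^*=\zeta$ is a genuine linear \FP\ equation and yields verbatim the first line of \eqref{e3.31}. The second line of \eqref{e3.31} is the costate equation \eqref{e3.20}, in which \eqref{e3.21} identifies $\eta\in g_\r(\cdot,\r)=F(\cdot,\r)$, completing \eqref{e3.32}. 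The terminal condition $p(T)=-\eta_0$ together with the transversality inclusion \eqref{e3.22} translates, through $G=(g_0)_q$, into the relation $p(T,x)+G(x,\r(T,x))\ni0$ of \eqref{e1.1}, i.e.\ \eqref{e3.33}; here one must track the sign convention linking the value function $\psi$ and the costate $p=-\psi$. Finally \eqref{e3.29}--\eqref{e3.30} are nothing but \eqref{e3.15} and \eqref{e3.19}.

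For uniqueness I would check that the extra hypotheses of Theorem~\ref{t3}, namely $\r_0\in L^{\alpha^*}$, strict monotonicity of $q\mapsto F(t,x,q)$, and the growth bound \eqref{e3.23}, are exactly the hypotheses of Theorem~\ref{t2} rewritten for $F$ and $g$, strict monotonicity of $F=g_q$ being equivalent to strict convexity of $q\mapsto g(t,x,q)$. Theorem~\ref{t2} then delivers uniqueness of $u^*$ and of the solution $(\r,p)$ to \eqref{e3.24}--\eqref{e3.26}, which is \eqref{e3.31}--\eqref{e3.33}.

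The content of Theorem~\ref{t3} is thus essentially bookkeeping, the substantive work having been done in Theorems~\ref{t1} and~\ref{t2}. I expect the only genuine obstacle to be twofold: first, the direction of the potentiality dictionary and the sign bookkeeping in the terminal (transversality) condition, since $F$ and $G$ enter \eqref{e1.1} through the backward/adjoint equation whereas in \eqref{e3.20}--\eqref{e3.22} they appear via the maximum principle with $p=-\psi$; second, confirming that the distributional reading of the backward problem \eqref{e3.20}, justified after the statement of Theorem~\ref{t1} from $p\in L^\alpha(0,T;W^{1,\alpha})$ and the resulting $\pp_t p\in L^\alpha(0,T;\calx)$, is compatible with interpreting \eqref{e1.1} as an a.e.\ identity and with attaining the multivalued terminal inclusion in the trace sense. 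Everything else transfers mechanically.
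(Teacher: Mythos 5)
Your proposal is correct and follows exactly the paper's route: the paper gives no separate proof of Theorem~\ref{t3}, presenting it as the immediate reformulation of Theorems~\ref{t1} and~\ref{t2} under the dictionary $L=H^*$, $F=g_q$, $G=(g_0)_q$, $\zeta=u^*$, with $\zeta\in(L^\9(Q_T))^d$ coming from \eqref{e3.12} and \eqref{e3.29}--\eqref{e3.30} being \eqref{e3.15} and \eqref{e3.19}. Your attention to the sign bookkeeping in the terminal condition is warranted (the minus sign in \eqref{e3.22} is inconsistent with \eqref{e3.26}, \eqref{e3.33} and with the construction in \eqref{e4.15}, \eqref{e4.45}; your reading $p(T)=-\eta_0$ with $\eta_0\in G(x,\r(T,x))$ is the consistent one), but this does not change the argument.
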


\begin{remark}\label{r3.4} \rm If system \eqref{e3.31} is also sufficient for optimality in problem \eqref{e1.5a}, which is the case if $(\r,p)$ are sufficiently smooth, then the uniqueness in \eqref{e3.31} follows directly by the strong convexity of the cost functional \eqref{e1.5a} and, in particular, of $g$.
\end{remark}

We also note (see Remark \ref{r2.3}) that, if $d=1$, then the solution $\r$ to \eqref{e3.31} satisfies also 
$$\r\in L^2(Q_T)\cap W^{1,2}(\delta,T;H^1)\cap L^2(\delta,T;H^1),\ \ff\delta\in(0,T).$$
A typical example covered by the above theorem and which agrees with the mean field game model \eqref{e1.2} with control constraints is

$$L(t,x,u)=I_U(u)+L_0(t,x,u),\ u\in\rrd, \ (t,x)\in Q_T,$$where $U$ is a closed, convex and  bounded set of $\rrd$, 
$$I_U(u)=0\mbox{ if }u\in U;\ I_U(u)=+\9\mbox{ if }u\,\ov{\in}\,U,$$while $L_0:(Q_T)\times\rrd\to\rr$ is measurable in $(t,x)$, convex and continuous in $u$ and satisfies \eqref{e3.5}. In this case, 
$$\barr{rcl}
H(t,x,q)&=&\sup\{q\cdot u-L_0(t,x,u);\ u\in U\},\vsp
H_q(t,x,u)&\equiv&(N_U+(L_0)_u(t,x,\cdot)\1(u),\earr$$
where $N_U\subset\rrd$ is the normal cone to $U$. In particular, if $L_0\equiv0$, then $H(t,x,q)\equiv I^*_U(q)$ (the support function of $U$). For instance, if $U=\{u\in\rrd;$ $|u|\le a\}$, then $H(t,x,q)\equiv a|q|$, and so $H_q$ is the multivalued function
$$H_q(t,x,q)=a\,\frac q{|q|}\mbox{ if }q\ne0;\ H_q(t,x,0)=\{z\in\rrd;\,|z|\le a\}.$$  
In this case, the mean field game system \eqref{e1.1} is a parabolic system with the free boundary $\{(t,x)\in Q_T;\,\nabla p(t,x)=0\}$. 

We also note that the above existence theorem covers the case of the mean field game system \eqref{e1.1} with discontinuous and monotonically nondecreasing functions $q\to F(t,x,q)$, $r\to G(t,x,r)$, where $q\to F(t,x,q)$ is of subgradient type.  Namely, for such a pair of functions one could apply Theorem \ref{t2} by filling the jumps in discontinuity points $\{q_i\}$ and $\{r_i\}_i$. In other words, one replaces $F$ and $G$ in \eqref{e1.1} by the multivalued maximal monotone functions

$$\barr{lcl}
\wt F(t,x,q)&=&\la\barr{l}
F(t,x,q)\mbox{ if }q\ne q_i,\ i=1,2,\\
{[F(t,x,q_i-0),\ F(t,x,q_i+0)]}\mbox{ if }q=q_i,\earr\right.\mk\\
\wt G(x,r)&=&\la\barr{l}
G(x,r)\mbox{ if }r\ne r_i,\\
{[G(x,r_i-0),\ G(x,r_i+0)]}\mbox{ if }r=r_i.\earr\right.\earr$$
In this case,  the solution  $(\r,p)$ given by  Theorem \ref{t2}  can be viewed as a Filipov solution to the mean field system \eqref{e1.1} with monotone and discontinuous coupling functions $q\to F(\cdot,q)$, $r\to G(\cdot,r)$.
 
As seen in Theorem \ref{t1}, $\r\in L^\beta(Q_T)$ and   $\r\in L^\alpha(0,T;W^{1,\alpha})$,   where  $(\alpha,\beta)\in \Lambda$. For the initial value $\r_0\in L^1$, this regularity seems to be optimal because it follows directly by the singularity of the heat kernel $E$ in $t=0$ (see \eqref{e2.16}).

\section{Proof of Theorem \ref{t1}}\label{s4}
\setcounter{equation}{0}

We set
$$\barr{r}
J(u,\r)=\dd\int_{Q_T}(L(t,x,u(t,x))\r(t,x)+g(t,x,\r(t,x)))dtdx\vsp
\qquad+\dd\int_{\rrd}g_0(x,\r(T,x))dx,\ u\in L^\9(Q_T),\ \r\in L^2(Q_T),
\earr$$
and we denote by $\r^u$ 
 the solution  to \eqref{e2.1} given by Proposition \ref{p1}. We may rewrite \eqref{e3.1} as the minimization problem
\begin{equation}\label{e4.1}
{\rm Min}\{J(u,\r^u);\ u\in (L^\9(Q_T))^d\}.\end{equation}
We prove first the existence of a solution $u$ to \eqref{e4.1}. To this end, we note first that $J(u,\r^u)\not\equiv+\9$. Indeed, for $u=0$ we have, by \eqref{e2.14aa}, 
$$\r^0(t,x)=\int_\rrd E(t,x-\xi)\r_0(\xi)d\xi,\ \ff(t,x)\in Q_T,$$
and so 
$$g(t,x,\r^0(t,x))\le\int_\rrd E(t,x-\xi)g(t,\xi,\r_0(\xi))d\xi,\ \ff (t,x)\in Q_T.$$
Then, by \eqref{e3.13}, $g(t,x,\r^0)\in L^1(Q_T)$. 

Moreover, since by \eqref{e3.5}  $L(t,x,0)\in L^\9(Q_T)$ and $\r^0\in L^1(Q_T)$, we conclude that $J(0,\r^0)\in L^1(Q_T)$, as claimed.

We consider a sequence $\{u_n,\r_n\}$, where $\r_n=\r^{u_n}$ such that
\begin{equation}\label{e4.2}
\inf_u J(u,\r^u)\le J(u_n,\r_n)\le\inf_u J(u,\r^u)+\frac1n,\ \ff n\in\nn.\end{equation}
Clearly, $\{u_n\}$ is bounded in $(L^\9(Q_T))^d$, $\r_n>0$ on $Q_T$, and by Proposition \ref{p1}  $\{\r_n\}$ is bounded in $L^\beta(Q_T)$. Moreover, by \eqref{e2.14}, 
  $\{p_n(T)\}$ is bounded in $L^{m_0}$, $m_0\in\left[1,\frac d{d-1}\right)$. Hence,  we have on a subsequence, again denoted $\{n\}$, 
\begin{equation}\label{e4.3}
\barr{rcll}
u_n&\to&u^*&\mbox{ weak$^*$ in $(L^\9(Q_T))^d$}\vsp
\r_n&\to&\r^{u^*}=\r^*&\mbox{ weakly in $L^\beta(Q_T),$}\vsp 
\r_n(T)&\to&\r^*(T)&\mbox{ weakly in }L^{m_0}.\earr
\end{equation}
By \eqref{e3.5} and  \eqref{e4.3},   we have
\begin{equation}\label{e4.3aaaa}
\dd\liminf_{n\to\9}\int_{Q_T}L(t,x,u_n)\r_n\,dtdx \ge\dd\int_{Q_T}L(t,x,u^*)\r^*\,dtdx,\end{equation}because the function $u\to\int_{Q_T}L(t,x,u)\r^u\,dtdx$ is convex and, therefore, weakly lower-semicontinuous in $L^\9(Q_T)$. More precisely, if $v_n=\r_n u_n$, then
$$\barr{c}
L(t,x,u_n)\r_n\equiv L\(t,x,\dd\frac{v_n}{\r_n}\)\r_n,\qquad 
\dd\frac{\pp\r_n}{\pp t}-\nu\Delta\r_n+{\rm div}\,v_n=0\mbox{ in }Q_T.\earr$$
Since the function $(\r,v)\to L\(t,x,\frac v\r\)\r$ is  convex and $0\le v_n\le a\r_n$, a.e. in $Q_T$, we get by the  weak-lower semicontinuity of convex integrals in $L^\beta(Q_T)\times L^\9(Q_T)$ that
$$\liminf_{n\to\9}\int_{Q_T}L\(t,x,\frac{v_n}{\r_n}\)\r_n\,dtdx
\ge\dd\int_{Q_T}L\(t,x,\frac v\r\)\r\,dtdx,$$where $v=w-\dd\lim_{n\to\9}v_n$ in $L^\beta(Q_T)$, and so \eqref{e4.3aaaa} follows. 

Since the functions $\r\to g(t,x,\r)$ and $r\to g_0(x,r)$ are convex and continuous, we have by \eqref{e4.3} and the weak-lower semicontinuity of corresponding convex integrands 
$$\barr{r}
\dd\liminf_{n\to\9}\la\int_{Q_T}g(t,x,\r_n(t,x))dtdx+\int_{\rrd}g_0(x,\r_n(T,x))dx\ra\vsp
\ge\dd\int_{Q_T}g(t,x,\r^*(t,x))dtdx+\int_\rrd g_0(x,\r^*(T,x))dx,\earr$$
and so, by \eqref{e4.3aaaa} we have
$$\liminf_{n\to\9}J(u_n,\r_n)\ge J(u^*,\r^*).$$Then, by \eqref{e4.2} it follows that $J(u^*,\r^*)=\inf\limits_u J(u,\r^u)$, as claimed.

We shall prove now that the optimal pair $(u^*,\r^*)$ satisfies \eqref{e3.18}--\eqref{e3.22}.  To this purpose, we consider the adapted optimal control problem (see, e.g., \cite{1}, p.~24, for this technique)
\begin{equation}\label{e4.5}
\underset u{\rm Min}\la J_\vp(u,\r^u)+\frac12\int_{Q_T}\vf (x)\r^u(t,x)|u(t,x)-u^*(t,x)|^2dtdx\ra,\end{equation}
\begin{equation}\label{e4.6}
\barr{ll}
J_\vp(u,\r^u)\!\!\!&\equiv\dd\int_Q(L(t,x,u(t,x))\r^u(t,x)+g_\vp(t,x,\r^u(t,x)))dtdx\vsp
&+\dd\int_\rrd(g_0)_\vp(x,\r^u(T,x))dx.\earr\end{equation}
Here, $\vf\in C(\rrd)\cap L^\9\cap L^1$ is a given function such that $\vf(x)>0,$ $\ff x\in\rrd,$ and  $\r^u_\vp$ is  the solution to \eqref{e2.1}, with initial value $\r_\vp(0)=\r^\vp_0$, where $\r^\vp_0\in L^1\cap L^2$ is such that $\r^\vp_0\to\r_0$, strongly in $L^1$ as $\vp\to0$. The functions 
$g_\vp,$ $(g_0)_\vp$ are the Yosida approximations of $q\to g(\cdot,q)$ and $q\to g_0(\cdot,q)$, respectively, that~is,

\begin{eqnarray}
g_\vp(t,x,q)&=&\min\la\frac1{2\vp}|q-\theta|^2+g(t,x,\theta);\theta\in\rr\ra,\ q\in\rr,\label{e4.7}\\[1mm]
(g_0)_\vp(x,q)&=&\min\la\frac1{2\vp}|q-\theta|^2+g_0(x,\theta);\theta\in\rr\ra,\ q\in\rr.\label{e4.8}
\end{eqnarray}

We recall (see, e.g., \cite{1}, p.~98) that the functions  $q\to g_\vp(t,x,q)$ and $q\to(g_0)_\vp(x,q)$ are  differentiable on $\rrd$ and
\begin{equation}\label{e4.10}
\barr{rcl}
g_\vp(t,x,q)\!\!\!&=&\!\!\!\frac1{2\vp}|q{-}(I{+}\vp g_q(t,x,\cdot))\1q|^2{+}g(t,x,(I{+}\vp g_q(t,x,\cdot))\1q),\vsp
\hspace*{-5mm}(g_0)_\vp(x,q)\!\!\!&=&\!\!\!\frac1{2\vp}
|q{-}(I{+}\vp (g_0)_q(x,\cdot))\1q|^2{+}g_0(x,(I{+}\vp (g_0)_q(x,\cdot))\1q).\earr
\end{equation}
In the following, for simplicity we shall denote by $g'_\vp(t,x,q)$ and $(g_0)'_\vp(x,q)$ the differentials of the functions $q\to g_\vp(t,x,q)$ and $q\to(g_0)_\vp(x,q)$, respectively.

As seen above, for each $\vp>0$, problem \eqref{e4.5} has a solution $(u_\vp,\r_\vp=\r^{u_\vp})$, where  
\begin{equation}\label{e4.11}
\barr{ll}
\dd\frac{\pp\r_\vp}{\pp t}-\nu\Delta\r_\vp+\divv(u_\vp\r_\vp)=0&\mbox{in }Q_T,\vsp 
\r_\vp(0)=\r^\vp_0&\mbox{in }\rrd.\earr
\end{equation}
By Proposition \ref{p2.2} we know that $\r_\vp\in C([0,T];L^2)\cap L^2(0,T;H^1)$, $\frac{d\r_\vp}{dt}\in L^2(\delta,T;H\1)$, and
\begin{equation}\label{e4.12a}
\	|\r_\vp(t)\|_{L^\beta(Q_T)}\le C|\r^\vp_0|_1(1+T^{\frac12}|u|_\9)\le C_1|\r_0|_1. \end{equation}
(Everywhere in the following we shall use the same symbol $C$ to denote several positive constants independent of $\vp$.)

We have
$$\barr{l}
J_\vp(u_\vp+\lbb v,\r^{u_\vp+\lbb v})+\dd\frac12\int_{Q_T}\vf  \r^{u_\vp+\lbb v}|u_\vp+\lbb v-u^*|^2dtdx
\le J_\vp(u_\vp,\r_\vp)\vsp\qquad
+\dd\frac12\int_{Q_T}\vf \r_\vp|u_\vp-u^*|^2dtdx,\ff\lbb>0,v\in (L^\9(Q_T))^d\cap (L^1(Q_T))^d,\earr$$
which yields
\begin{equation}\label{e4.13}
\barr{l}
\dd\int_{Q_T}\!\!\!\Big(L'(t,x;u_\vp,v)\r_\vp
+L(t,x,u_\vp)z+g'_\vp(t,x,\r_\vp)z+\vf \r_\vp(u_\vp-u^*)\cdot v\\
\qquad +\dd\frac12\vf |u_\vp-u^*|^2z\Big)dtdx 
 +\dd\int_\rrd(g_0)'_\vp(x,\r_\vp(T,x)z(T,x)dx=0,\vsp
\hfill \ff v\in (L^\9(Q_T))^d\cap (L^1(Q_T))^d,\earr
\end{equation}
where  $L'(\cdot;u_\vp,v)$ is the directional derivative of the function $u\to L(\cdot,u)$ in direction $v$ (see \eqref{e1.17})  and $z$ is the solution to
\begin{equation}\label{e4.14}
\barr{l}
\dd\frac{\pp z}{\pp t}-\nu\Delta z+\divv(u_\vp z+v\r_\vp)=0\mbox{ in }Q_T,\vsp
z(0,x)=0,\ x\in\rrd.\earr\end{equation}
Let
$$p_\vp\in C([0,T];L^2)\cap L^1(0,T;H^1),\ \frac{dp_\vp}{dt}\in L^2(0,T;H\1)$$
be the solution to the backward Cauchy problem
\begin{equation}\label{e4.15}
\barr{l}
\dd\frac{\pp p_\vp}{\pp t}+\nu\Delta p_\vp+u_\vp\cdot\nabla p_\vp=g'_\vp(t,x,\r_\vp)+L(t,x,u_\vp)\\
\hfill+\dd\frac12\vf |u_\vp-u^*|^2\mbox{ in }Q_T,\vsp 
p_\vp(T,x)=-(g_0)'_\vp(x,\r_\vp(T,x)),\ x\in\rrd.\earr\end{equation}
(The existence of $p_\vp$  follows as in Proposition \ref{p1}.) By \eqref{e2.14a} and \eqref{e4.14} we also have
\begin{equation}\label{e4.16}
|p_\vp(T)|_k\le C|\r_\vp(T)|_k\le C|\r^\vp_0|_1\le C_1,\ \ff\vp>0,\ k=1,2.\end{equation}
We also note that, by the maximum principle,  
$p_\vp\ge0,\mbox{ a.e. in }Q_T.$

By \eqref{e4.13}--\eqref{e4.15} and \eqref{e2.4} we get 

$$\barr{r}
\dd\int_{Q_T}\r_\vp \nabla p_\vp\cdot v\ dtdx=-\int_{Q_T}v(L(t,x,u_\vp)+(g_\vp)_\r(t,x,\r_\vp))dtdx\\
+\dd\frac12\int_{Q_T}\vf|u_\vp-u^*|^2z\ dtdx,\earr$$
and recalling \eqref{e4.13} we have

$$\barr{r}
\dd\int_{Q_T}(L'(t,x;u_\vp,v)-(\nabla p_\vp+\vf (u^*-u_\vp))\cdot v)\r_\vp\,dtdx=0,\\ \ff v\in (L^\9(Q_T))^d\cap (L^1(Q_T))^d.\earr$$Then, by \eqref{e1.18} we get
$$\int_{Q_T}(\theta_\vp-(\nabla p_\vp+\vf (u^*-u_\vp))\cdot v)dtdx\le0,\ \ff v\in  (L^\9(Q_T))^d\cap (L^1(Q_T))^d,$$where
\begin{equation}\label{e4.19a}
	\theta_s(t,x)\in L_u(t,x,u_\vp)),\mbox{ a.e. }(t,x)\in Q_T.
	\end{equation}
(Clearly, by \eqref{e1.18}, $\theta_\vp$ can be chosen a measurable function in $Q_T$.)

Taking into account that, as seen in Proposition \ref{p1}, $\r_\vp(t,x)>0$, a.e. \mbox{$(t,x)\in Q_T$}, we get
\begin{equation}\label{e4.17}
\nabla p_\vp(t,x)+\vf (x)(u^*(t,x)-u_\vp(t,x))=\theta_\vp(t,x),\ae(t,x)\in Q_T.\end{equation}
and note that, since $(L_u)\1(q)=H_q(q),$ by \eqref{e4.17} we have
\begin{equation}\label{e4.19}
\barr{r}
u_\vp(t,x)\in H_q(t,x,\nabla p_\vp(t,x)+\vf (x)(u^*(t,x)-u_\vp(t,x))),\vsp\ae(t,x)\in Q_T.\earr\end{equation}
Equivalently,
\begin{equation}\label{e4.20}
\barr{r}
H(t,x,\nabla p_\vp(t,x)+\vf(x)(u^*(t,x)-u_\vp(t,x)))=u_\vp(t,x)\cdot\nabla p_\vp(t,x)\vsp 
-L(t,x,u_\vp(t,x))+\vf (t,x)(u^*(t,x)-u_\vp(t,x))u_\vp(t,x),  \vsp 
\ae(t,x)\in Q_T.\earr\end{equation}
Then, we may rewrite equation \eqref{e4.15} as
\begin{equation}\label{e4.21}
\barr{l}
\dd\frac{\pp p_\vp}{\pp t}+\nu\Delta p_\vp+H(t,x,\nabla p_\vp+\vf (u^*-u_\vp))\\
\qquad\qquad
=g'_\vp(t,x,\r_\vp)-\dd\frac12\,\vf(|u^*|^2-|u_\vp|^2)\mbox{ in }Q_T,\vsp
p_\vp(T,x)=-(g_0)'_\vp(x,\r_\vp(T,x)),\ x\in\rrd.\earr\end{equation}
Now, we shall prove that, for $\vp\to0$,
\begin{equation}\label{e4.22}
\barr{rcll}
\rho_\vp&\to&\rho^*&\mbox{ weakly in $L^\beta(Q_T),$},\vsp
u_\vp&\to&u^*&\mbox{ strongly in }(L^2_{\rm loc}(Q_T))^d.\earr\end{equation}
To this end, coming back to \eqref{e4.5}--\eqref{e4.6}, we note that
$$J_\vp(u_\vp,\r_\vp)+\frac12\int_{Q_T}\vf \r_\vp|u_\vp-u^*|^2dtdx\le J_\vp(u^*,\r^*),\ \ff\vp>0.$$
Since
\begin{equation}\label{e4.24a}
	\barr{lcll}
g_\vp(t,x,\r^*(t,x))&\to&g(t,x,\r^*(t,x)),&\ae(t,x)\in Q_T,\vsp 
(g_0)_\vp(x,\r^*(T,x))&\to&g_0(x,\r^*(T,x)),&\ae x\in\rrd, 
\earr\end{equation}
and $g_\vp\le g$,  a.e. in $Q_T$, $(g_0)_\vp\le g_0$, a.e. in $\rrd.$ We infer that
\begin{equation}\label{e4.23}
\lim_{\vp\to0}J_\vp(u^*,\r^*)=J(u^*,\r^*).\end{equation}
On the other hand, by \eqref{e4.11}  and \eqref{e2.15a}, \eqref{e2.14}, \eqref{e2.16aa} it follows 
that $\{\r_\vp\}$ is bounded in $L^\beta(Q_T)$, $\r_\vp(T)$ is bounded in $L^{m_0}$, while by \eqref{e3.1} $\{u_\vp\}$ is weak$^*$ compact in $(L^\9(Q_T))^d$. Hence, on a subsequence, again denoted $\{\vp\}\to0$, we have
\begin{equation}\label{e4.24}
\barr{rcll}
\r_\vp&\to&\wt\r&\mbox{weakly in $L^\beta(Q_T)$}\vsp 
	\r_\vp(T)&\to&\wt\r(T)&\mbox{weakly in $L^{m_0}$}\vsp 
	u_\vp&\to&\wt u&\mbox{weak$^*$ in $(L^\9(Q_T))^d$}.
	\earr
\end{equation}

Then, it follows by \eqref{e4.10} and the lower-semicontinuity of the convex integrands corresponding to the functions   $q\to g(t,x,q),$ $q\to g_0(x,q)$ in the spaces $L^1(Q_T)$ and $L^2(\rrd)$, respectively, 
\begin{equation}\label{e4.28b}
	\barr{rcl}
\dd\liminf_{\vp\to0}\dd\int_{Q_T} (g)_\vp(t,x,\r_\vp)dtdx&\ge& \dd\int_{Q_T}g(t,x,\wt\r)dtdx\vsp  
\dd\liminf_{\vp\to0}
\dd\int_{\rrd}(g_0)_\vp(x,\r_\vp(T,x))dtdx
&\ge& \dd\int_{\rrd}g_0(x,\wt\r(T,x))dtdx\earr
\end{equation}
and, therefore, by \eqref{e4.23} we have
\begin{equation}\label{e4.25}
\barr{c}
J(u^*,\r^*)\le J(\wt u,\wt\r)+\dd\limsup_{\vp\to0}\int_{Q_T}\vf\r_\vp|u_\vp-u^*|^2dtdx\vsp 
\hspace*{-6mm}\le J(u^*,\r^*)=\dd\inf_u\{J(u,\r^u)\}.\earr\end{equation}
Hence, 
$$\lim_{\vp\to0}\int_{Q_T}\vf\,\r_\vp|u_\vp-u^*|^2dx=0.$$
Now, we  set $v_\vp=\r_\vp u_\vp$ and get
\begin{eqnarray}
&\dd\int_{Q_T}L(t,x,u_\vp)\r_\vp dtdx=\int_{Q_T}L\(t,x,\frac{v_\vp}{\r_\vp}\)\r_\vp dtdx,\label{e4.25a}\\
&\dd\frac{\pp\r_\vp}{\pp t}-\nu\Delta\r_\vp+{\rm div}\,v_\vp=0\mbox{ in }Q_T.\label{e4.25aa}
\end{eqnarray}
By \eqref{e4.25}--\eqref{e4.25aa}, we have on a subsequence 
\begin{equation}
\label{e4.25aaa}\barr{c}
v_\vp\to\wt v\mbox{ weakly in }L^\beta(Q_T),\vsp\dd\frac{\pp\wt\r}{\pp t}-\nu\Delta\wt\r+{\rm div}(\wt v)=0\mbox{ in }Q_T,\earr
\end{equation}
and, since as seen earlier the functional $(\r,v)\to\int_{Q_T}L\(t,x,\frac v\r\)\r\,dtdx$ is weakly lower semicontinuous in $L^\beta(Q_T)\times L^\beta(Q_T)$, we get by \eqref{e4.25} and~\eqref{e4.25aaa}
\begin{equation}
\label{e4.25aaaa}
\barr{l}
\dd\liminf_{\vp\to0}\int_{Q_T}L(t,x,u_\vp)\r_\vp dtdx\ge\dd\int_{Q_T}L\(t,x,\frac{\wt v}{\wt\r}\)\wt\r\,dtdx\vsp\hfill
=\dd\int_{Q_T}L(t,x,\wt u)\wt\r\,dtdx,\quad\vsp\dd\frac{\pp\wt\r}{\pp t}-\nu\Delta\wt\r+{\rm div}(\wt u\,\wt\r)=0\mbox{ in }Q.\earr
\end{equation}
Hence, by \eqref{e4.25}, \eqref{e4.25a}, \eqref{e4.25aaaa}, we get
$$\r_\vp(t,x)|u_\vp(t,x)-u^*(t,x)|\to0,\mbox{ a.e. }(t,x)\in(Q_T).$$
Taking into account that by \eqref{e2.12}
$$\int_\rrd\vf(x)|\log(\r_\vp(t,x))|dx\le C(\r_0),\mbox{ a.e. }t\in(0,T),$$
we infer that
\begin{equation}\label{e4.26}
u_\vp\to u^*,\ae\mbox{in }Q_T,\end{equation}
and, since $\{u_\vp\}$ is bounded in $(L^\9(Q_T))^d$, it follows that
\begin{equation}\label{e4.27}
u_\vp\to u^*\mbox{ in }(L^m_{\rm loc}(Q_T))^d,\ \ff m\ge1.\end{equation}
Hence, $\wt u=u^*$, \eqref{e4.24} holds for $\wt\r=\r^*=\r^{u^*}$, and so \eqref{e4.22} follows.

Now, we are going to pass to limit in \eqref{e4.21} to show that there is a solution $p$ to \eqref{e3.20}--\eqref{e3.22}, which satisfies \eqref{e3.19}. To this end, we prove first

\begin{lemma}\label{l1} The set $\{h^1_\vp(t,x)=g'_\vp(t,x,p_\vp(t,x))\}_{\vp>0}$ is weakly compact in $L^1(Q_T)$.\end{lemma}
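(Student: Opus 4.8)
The plan is to establish weak $L^1$ compactness of the sequence $\{h^1_\vp\}$ by verifying the hypotheses of the Dunford--Pettis theorem, namely uniform integrability together with tightness (no escape of mass to infinity in $x$). Both conditions will be extracted from the a~priori bound \eqref{e4.12a} on $\{\r_\vp\}$ in $L^\beta(Q_T)$ combined with the growth restriction \eqref{e3.2a} on the subgradient of $g$. I would first recall that, by the definition \eqref{e4.10} of the Yosida approximation, one has $g'_\vp(t,x,\r_\vp)\in g_q(t,x,J_\vp\r_\vp)$ where $J_\vp=(I+\vp g_q(t,x,\cdot))\1$, and that the convexity identity \eqref{e1.11} gives the pointwise inequality $g'_\vp(t,x,q)\,q\le g(t,x,q)-g(t,x,0)+\text{(lower-order)}$, which is the key device for converting growth of the subgradient into integrability of $g$ itself.

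The heart of the argument is the uniform integrability estimate. Using \eqref{e3.2a}, for any given threshold there is a constant $C$ so that $|g'_\vp(t,x,q)|\le C\,|g(t,x,q)|/|q|$ for $|q|$ large, whence $|h^1_\vp|=|g'_\vp(t,x,\r_\vp)|$ is controlled, on the set where $\r_\vp$ is large, by a multiple of $|g(t,x,\r_\vp)|/\r_\vp$. Since $\{\r_\vp\}$ is bounded in $L^\beta(Q_T)$ with $\beta>1$ and $g(\cdot,\cdot,\r_\vp)$ is bounded in $L^1(Q_T)$ (as follows from the argument establishing \eqref{e4.23} together with $g_\vp\le g$), I would estimate, for a measurable set $A\subset Q_T$,
\begin{equation*}
\int_A|h^1_\vp|\,dtdx\le C\int_A\frac{|g(t,x,\r_\vp)|}{\r_\vp}\,\one_{[\r_\vp> R]}\,dtdx+\int_A|h^1_\vp|\,\one_{[\r_\vp\le R]}\,dtdx.
\end{equation*}
On the small-density set $[\r_\vp\le R]$ the subgradient is bounded by $m_R(t,x)$ in the notation suggested by \eqref{e3.7}, an $L^1$ function independent of $\vp$, so that contribution is uniformly integrable. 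On the large-density set the factor $\r_\vp^{-1}<R^{-1}$ makes the first integral small uniformly in $\vp$ once $R$ is large, using the $L^1$ bound on $g(\cdot,\cdot,\r_\vp)$. Choosing $R$ large and then $|A|$ small gives equi-absolute-continuity of the integrals, i.e.\ uniform integrability.

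For tightness I would integrate $|h^1_\vp|$ over the complement of a large ball $[|x|>N]$ and again split according to the size of $\r_\vp$; the large-density part is again damped by $\r_\vp^{-1}$, while the small-density part is dominated by the $L^1$ tail of $m_R$, which vanishes as $N\to\9$ uniformly in $\vp$. Together, uniform integrability and tightness yield, by Dunford--Pettis, relative weak sequential compactness of $\{h^1_\vp\}$ in $L^1(Q_T)$, which is the assertion. The main obstacle I anticipate is the careful handling of the growth condition \eqref{e3.2a}: it is an asymptotic \emph{limsup} statement about $g^0_q(t,x,q)q/|g(t,x,q)|$, and turning it into a genuinely uniform pointwise bound valid for all large $q$ and all $(t,x)$ requires care, since $g$ could vanish or change sign; one must exploit the coercivity lower bound \eqref{e3.6} to ensure $|g(t,x,\r_\vp)|$ stays comparable to $\r_\vp$ (up to the $L^1$ weight $m_1$) in the regime that matters, so that the quotient $|g(t,x,\r_\vp)|/\r_\vp$ is itself integrable with the needed uniformity.
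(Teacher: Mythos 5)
Your overall strategy (Dunford--Pettis via equi-integrability plus tightness, splitting according to the size of $\r_\vp$, and using \eqref{e3.2a} to trade the subgradient for $g$ divided by $\r_\vp$ on the large-density set) is close in spirit to the paper's, but the mechanism is genuinely different. The paper does not split $|h^1_\vp|$ directly; it passes to the conjugate function and uses the Fenchel identity $g^*_\vp(t,x,h^1_\vp)+g_\vp(t,x,\r_\vp)=\r_\vp h^1_\vp$ together with the superlinearity estimate $g^*(t,x,z)\ge\mu|z|-m_\mu(t,x)$ (a direct consequence of \eqref{e3.7}) to obtain the pointwise inequality $\mu|h^1_\vp|\le\r_\vp h^1_\vp+m_\mu$ for every $\mu>0$. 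Integrating this over an arbitrary measurable $\wt Q\subset Q_T$ and bounding $\int_{Q_T}\r_\vp h^1_\vp$ via \eqref{e3.2a} and the uniform bound \eqref{e4.31aa} on $\int_{Q_T}g_\vp(t,x,\r_\vp)$ yields equi-integrability and tightness in one stroke, with no need to estimate $g_q$ on bounded sets. Your route buys a more explicit picture of where the mass of $h^1_\vp$ lives, but it costs you an extra lemma: \eqref{e3.7} bounds $|g|$, not $|g_q|$, so your claim that the subgradient is dominated by an $L^1$ majorant on $[\r_\vp\le R]$ needs the convexity difference-quotient estimate $|g^0_q(t,x,q)|\le\frac 2R\,m_{2R}(t,x)$ for $|q|\le R$ (and its transfer to $g'_\vp$ through the resolvent), which you should state.

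There is one step that does not hold as written. You assert that $\{g(\cdot,\cdot,\r_\vp)\}$ is bounded in $L^1(Q_T)$ ``as follows from the argument establishing \eqref{e4.23} together with $g_\vp\le g$.'' The inequality $g_\vp\le g$ points the wrong way for this purpose: from $\sup_\vp\int_{Q_T}g_\vp(t,x,\r_\vp)\,dtdx\le C$ (which is what the optimality argument actually gives, cf.\ \eqref{e4.31aa}) one cannot deduce any bound on $\int_{Q_T}g(t,x,\r_\vp)\,dtdx$, since $g$ may grow much faster than its Yosida regularization. What \emph{is} available, by \eqref{e4.10}, is $g(t,x,J_\vp\r_\vp)\le g_\vp(t,x,\r_\vp)$ with $J_\vp=(I+\vp g_q(t,x,\cdot))\1$, i.e.\ an $L^1$ bound on $g$ evaluated at the resolvent point --- and that is exactly where $h^1_\vp=g_q(t,x,J_\vp\r_\vp)$ lives, so your argument closes once you carry out the large-density estimate consistently at $J_\vp\r_\vp$ rather than at $\r_\vp$ (controlling the negative part of $g$ there by \eqref{e3.6}, as you anticipate, and using \eqref{e1.11} only as a pointer: the quantitative input is \eqref{e3.2a}). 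With that repair, and the difference-quotient bound above, your proof is sound; as written, the $L^1$ bound on $g(\cdot,\cdot,\r_\vp)$ is unjustified.
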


\begin{proof} We note first that $h^1_\vp\ge0$, a.e. on $Q_T$ and we denote by $g^*_\vp$ the conjugate of the function $q\to g_\vp(t,x,q)$, that is (see \eqref{e1.10}),
	$$g^*_\vp(t,x,z)=\sup\{q\cdot z-g_\vp(t,x,q);\,q\in\rr\},\ \ff(t,x)\in Q_T,\ z\in\rr.$$We have
	$$g^*_\vp(t,x,h^1_\vp(t,x))+g_\vp(t,x,\r_\vp(t,x))=\r_\vp(t,x)h^1_\vp(t,x)),\ae(t,x)\in Q_T,$$
and taking into account that, by \eqref{e3.7},
	$$\barr{r}
	g^*_\vp(t,x,z)\ge g^*(t,x,z)=\sup\{z\cdot q-g(t,x,q);\,q\in\rr\}\ge\mu|z|-m_\mu(x),\vsp \ff(t,x)\in Q_T,\,\mu>0,\,z\in\rr,\earr$$where $m_\mu\ge0$,  $m_\mu\in L^1(Q_T)$, we get
\begin{equation}\label{e4.28}
\barr{ll}
\mu|h^1_\vp(t,x)|\!\!\!&\le\r_\vp(t,x)h^1_\vp(t,x)+m_\mu(t,x)-g_\vp(t,x,\r_\vp(t,x))\vsp 
&\le\r_\vp(t,x)h^1_\vp(t,x)+m_\mu(t,x),\ \ff\mu>0,(t,x)\in Q_T.
\earr\end{equation}
Integrating \eqref{e4.28} on $Q_T$ and using equations \eqref{e4.21} and \eqref{e4.11}, we get
\begin{equation}\label{e4.31a}
\mu\dd\int_{Q_T}h^1_\vp\,dtdx
\le\dd\int_{Q_T}\alpha_\mu\ dtdx
+\dd\int_{Q_T}g'_\vp(t,x,\r_\vp(t,x))\r_\vp(t,x)dtdx.
\end{equation}
We set $J_\vp(q)=(I+\vp g_q)\1(q),\ \ff q\in\rr$, and note by \eqref{e4.7}--\eqref{e4.8} that
$$g'_\vp(q)=g_q(J_\vp(q)),\ g_\vp(q)=g(J_\vp(q))
+\frac1{2\vp}|q-J_\vp(q)|^2,\ \ff q\in\rr.$$
Then, by \eqref{e3.2a} we have for some $C$ independent of $\vp$ and $(t,x)\in Q_T$,
$$\limsup_{q\to\9}\frac{g'_\vp(t,x,q)q}{g_\vp(t,x,q)}=C<\9$$
uniformly in $\vp$ and $(t,x)$. This implies that, for each $N>0$, we have
$$\r_\vp(t,x)h^1_\vp(t,x)\le Cg_\vp(t,x,\r_\vp(t,x))\mbox{ in }Q_N=\{(t,x)\in Q_T;\,\r_\vp(t,x)\ge N\}$$and so \eqref{e4.31a} yields
 
$$\barr{rcl}
\mu\dd\int_{Q_T}h^1_\vp(t,x)dtdx
&\le&\dd\int_{Q_T}m_\mu(t,x)dtdx
+C\int_{Q_N}g_\vp(t,x,\r_\vp(t,x))dtdx\vsp&&
+CN\dd\int_{Q_T}h^1_\vp(t,x)dtdx.
\earr$$
Taking into account that by \eqref{e4.5}--\eqref{e4.23}
\begin{equation}\label{e4.31aa}
	\sup_{\vp>0}\int_{Q_T}	 g_\vp(t,x,\r_\vp(t,x))dtdx\le C<\9,
\end{equation}
we get for $N=\frac{2\mu}C$ that
\begin{equation}\label{e4.31aaa}
	\mu\int_{Q_T} h^1_\vp(t,x) dtdx\le \int_{Q_T} m_\mu(t,x) dtdx+C,\ \ff\mu>0,\end{equation}
where $C$ is independent of $\vp$ and $\mu$.

In particular, it follows by \eqref{e4.31aaa} that $\{h^1_\vp\}$ is bounded in $L^1(Q_T)$.

 Now, let $\wt Q\subset Q_T$ be any measurable subset of $Q_T$. Integrating \eqref{e4.28} on~$\wt Q$, we get
$$\mu\int_{\wt Q} h^1_\vp\,dtdx\le\int_{\wt Q} m_\mu \,dtdx+\int_{Q_T}\r_\vp h^1_\vp\,dtdx,$$
because $\r_\vp h^1_\vp\ge0$, a.e. in $Q_T$. Then, arguing as above and again using \eqref{e4.31aa}, we~get  
$$\int_{\wt Q} h^1_\vp \le\frac1\mu \int_{\wt Q} m_\mu\,dtdx+\frac C\mu,\ \ff\mu>0,\ \vp>0.$$
Since $m_\mu\in L^1(Q_T)$ and $\wt Q\subset Q_T$ is arbitrary,   this implies that $\{h^1_\vp\}$ is equi-uniformly integrable on $Q_T$ and so, by the Dunford--Pettis theorem it is weakly compact in $L^1(Q_T)$, as claimed.\end{proof}

\begin{lemma}\label{l2} For $1\le\alpha<\frac{d+2}{d+1}$  we have
\begin{equation}\label{e4.28a}
\int_{Q_T}(|p_\vp|^\alpha+|\nabla p_\vp|^\alpha)dtdx\le C,\  \ff\vp>0,\end{equation}
	where $C$ is independent of $\vp$. \end{lemma}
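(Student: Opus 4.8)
The plan is to derive the estimate \eqref{e4.28a} by combining an $L^1$-based regularity theory for the parabolic equation \eqref{e4.21} with the weak compactness already established in Lemma \ref{l1}. The backward equation for $p_\vp$ has the form
$$
\frac{\pp p_\vp}{\pp t}+\nu\Delta p_\vp = -H(t,x,\nabla p_\vp+\vf(u^*-u_\vp))+g'_\vp(t,x,\r_\vp)-\tfrac12\vf(|u^*|^2-|u_\vp|^2),
$$
with terminal datum $p_\vp(T)=-(g_0)'_\vp(x,\r_\vp(T))$. First I would observe that the right-hand side is bounded in $L^1(Q_T)$ uniformly in $\vp$: the term $g'_\vp(t,x,\r_\vp)=h^1_\vp$ is exactly the family shown weakly compact (hence $L^1$-bounded) in Lemma \ref{l1}; the Hamiltonian term is controlled by \eqref{e3.10}, namely $|H(t,x,\cdot)|\le a|\cdot|+C_3$ together with the uniform $L^\9$ bound on $u_\vp$ (through \eqref{e3.12} one even has $|H_q|\le C_3$, so $\nabla p_\vp\cdot H_q$ and the Hamiltonian stay integrable); and the remaining term is bounded because $\vf\in L^1\cap L^\9$ and $\{u_\vp\}$ is bounded in $(L^\9(Q_T))^d$. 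The terminal value is bounded in $L^{m_0}\subset L^1$ by the estimate \eqref{e4.16}. Thus the whole problem for $p_\vp$ is an $L^1$-data linear parabolic backward Cauchy problem with source uniformly bounded in $L^1(Q_T)$.

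The heart of the argument is then the classical gain of integrability for the gradient of solutions to heat equations with $L^1$ right-hand side. I would invoke the Boccardo--Gallou\"et type estimate: if $w$ solves $\pp_t w-\nu\Delta w=f$ with $f\in L^1(Q_T)$ and $L^1$ data, then $w\in L^\alpha(0,T;W^{1,\alpha})$ for every $\alpha<\frac{d+2}{d+1}$, with norm controlled by $|f|_{L^1}+|\text{data}|_{L^1}$. The exponent $\frac{d+2}{d+1}$ is precisely the sharp threshold appearing in the statement, which strongly signals that this is the intended mechanism; it reflects the parabolic scaling of the fundamental solution \eqref{e2.15}, consistent with the heat-kernel estimates \eqref{e2.16}. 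Applying this to $w=p_\vp$ (after the time reversal $t\mapsto T-t$ to turn the backward problem into a forward one) yields directly
$$
\int_{Q_T}\bigl(|p_\vp|^\alpha+|\nabla p_\vp|^\alpha\bigr)\,dtdx\le C,\qquad 1\le\alpha<\tfrac{d+2}{d+1},
$$
with $C$ independent of $\vp$, which is exactly \eqref{e4.28a}.

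The main obstacle I anticipate is making the Hamiltonian term genuinely part of the $L^1$ source rather than a nonlinear term that could degrade the estimate. Because $H$ is only sublinear in its gradient argument, $H(t,x,\nabla p_\vp+\ldots)$ is \emph{a priori} only $L^\alpha$ once we know $\nabla p_\vp\in L^\alpha$, which is circular. The clean way around this is to exploit \eqref{e3.12}: by convex duality the identity \eqref{e4.20} rewrites the Hamiltonian exactly as $u_\vp\cdot\nabla p_\vp-L(t,x,u_\vp)+\vf(u^*-u_\vp)\cdot u_\vp$, and since $\{u_\vp\}$ is bounded in $L^\9$ and $L(t,x,u_\vp)$ is bounded in $L^2(Q_T)$ by \eqref{e3.5}, the troublesome term is linear in $\nabla p_\vp$ with an $L^\9$ coefficient. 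One then either absorbs the $u_\vp\cdot\nabla p_\vp$ term into the principal part via the standard truncation/level-set argument of Boccardo--Gallou\"et applied to the drift equation, or treats $u_\vp\cdot\nabla p_\vp$ together with $\nu\Delta p_\vp$ as a uniformly parabolic operator in divergence-like form with bounded lower-order coefficients, for which the same sub-critical gradient estimate holds. The second delicate point is the truncation bookkeeping: the Boccardo--Gallou\"et method proceeds by testing with $T_k(p_\vp)$ (and powers), estimating the measure of super-level sets, and summing a geometric series in the truncation parameter; I would carry this out on the time-reversed forward equation and keep careful track that every constant depends only on the $L^1$ norm of the source and the $L^\9$ bound on $u_\vp$, both of which are $\vp$-uniform. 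Once the estimate is in hand, it furnishes the compactness needed to pass to the limit in \eqref{e4.21} and recover \eqref{e3.19}--\eqref{e3.22}.
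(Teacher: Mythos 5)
You have correctly identified the two inputs that drive the lemma -- the uniform $L^1(Q_T)$ bound on $g'_\vp(t,x,\r_\vp)$ from Lemma \ref{l1} together with the $L^1$ bound on the terminal datum, and the fact that $\frac{d+2}{d+1}$ is the parabolic $L^1$-regularity exponent -- and this is indeed the skeleton of the paper's argument. But the paper does not invoke a Boccardo--Gallou\"et truncation scheme; it time-reverses, writes $\wt p_\vp$ by Duhamel's formula against the heat kernel, uses the convolution estimates \eqref{e2.16} (whose time-singularity $t^{\frac d2(\frac1\alpha-1)-\frac12}$ is exactly what produces the threshold $\alpha<\frac{d+2}{d+1}$), and -- crucially -- closes the circular dependence on $\nabla\wt p_\vp$ by a Banach fixed point: the map $\Gamma$ in \eqref{e4.35} is a contraction on $(L^\alpha(Q_T))^d$ for $T<T^*$ because $H$ is Lipschitz in $q$ by \eqref{e3.12}, and the estimate is then iterated in time. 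That contraction is precisely the device that replaces the step you leave open.

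The gap in your proposal is exactly at that step. Your option (a), absorbing $u_\vp\cdot\nabla p_\vp$ by the standard truncation/level-set bookkeeping, does not close: testing with $T_1(p_\vp-T_k(p_\vp))$ the drift contributes $\|u_\vp\|_\9\int_{\{|p_\vp|\ge k\}}|\nabla p_\vp|$, and summing over $k$ produces $\int|p_\vp|\,|\nabla p_\vp|$, which is not controlled; integrating the drift by parts instead would require $\divv u_\vp$, and $u_\vp$ is only in $(L^\9(Q_T))^d$. Your option (b) asserts that the $L^1$-data gradient estimate ``holds the same'' for a non-divergence drift with bounded coefficient -- this is true, but it is the content of the lemma, not a citable black box in the form you state it, and the proof of it (Aronson-type Gaussian bounds for the fundamental solution of $\pp_t-\nu\Delta-u_\vp\cdot\nabla$, or the paper's short-time contraction) is exactly what is missing. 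A secondary problem: linearizing $H$ via \eqref{e4.20} puts $L(t,x,u_\vp)$ into the source, and hypothesis \eqref{e3.5} only gives $\sup\{|L(t,x,u)|;|u|\le a\}\in L^2(Q_T)$; since $Q_T=(0,T)\times\rrd$ has infinite measure this is \emph{not} an $L^1(Q_T)$ bound, so your claim that the whole right-hand side is uniformly bounded in $L^1(Q_T)$ is unjustified. The paper sidesteps this by keeping $L$ inside $H$ (equation \eqref{e4.21}), so that the only source needing an $L^1$ bound is $h_\vp=-g'_\vp+\frac12\vf(|u^*|^2-|u_\vp|^2)$, and by handling the Hamiltonian through its Lipschitz constant rather than through its size.
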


\begin{proof} We set 		
	$$\barr{ll}
	\wt p_\vp(t,.x)\equiv p_\vp(T-t,x),& \wt\r_\vp(t,x)\equiv\r_\vp(T-t,x),\vsp \wt u_\vp(t,x)\equiv u_\vp(T-t,x),& \wt u^*(t,x)\equiv u^*(T-t,x),\earr	$$and rewrite \eqref{e4.21} as the forward Cauchy problem
\begin{equation}\label{e4.29}
\barr{l}
\dd\frac{\pp\wt p_\vp}{\pp t}-\nu\Delta\wt p_\vp-H (T-t,x,\nabla\wt p_\vp+\vf(\wt u^*-\wt u_\vp))\\
\hspace*{10mm}=-g'_\vp(T-t,x,\wt\r_\vp)+\dd\frac12\vf(|\wt u^*|^2-|\wt u_\vp|^2)\mbox{ in }Q_T,\vsp\wt p_\vp(0,x)=-(g_0)'_\vp(x,\wt\r_\vp(0,x)),\ x\in\rrd.\earr\end{equation}
By \eqref{e2.14aa} we may represent $\wt p_\vp$ as
\begin{equation}\label{e4.30}
\hspace*{-6mm}\barr{ll}
\wt p_\vp(t,x)\!~\!\!&
=\dd\int_\rrd\!\! E(t,x-\xi)  \wt p_\vp(0,\xi)d\xi+  \dd\int^t_0\!\!\int_\rrd E(t-s,x-\xi) (h_\vp(s,\xi)\vsp 
&+H (T-s,\xi,\nabla\wt p_\vp(s,\xi)+\vf (\xi)(\wt u^*(s,\xi)-\wt u_\vp(s,\xi)))d\xi ds,\earr\end{equation}
where $E$ is the heat kernel \eqref{e2.15} and
$$h_\vp(t,x)\equiv-g'_\vp(T-t,x,\wt\r_\vp(t,x))+\frac12\,\vf(x)
(|\wt u^*(t,x)|^2-|\wt u_\vp(t,x)|^2).$$
By \eqref{e4.30}, we get
\begin{equation}\label{e4.31}
\barr{ll}
\nabla\wt p_\vp(t,x)= \dd\int_\rrd \nabla_x E(t,x-\xi)
\wt p_\vp(0,\xi)d\xi\vsp 
\qquad+\dd\int^t_0 \int_\rrd \nabla _x E(t-s,x-\xi)  (h_\vp(s,\xi))\vsp 
\qquad+H(T-s,\xi,\nabla\wt p_\vp(s,\xi)
+\vf (\xi)(\wt u^*(s,\xi)-\wt u_\vp(s,\xi)))d\xi ds,\vsp 
\hfill\ff(t,x)\in  Q_T.\earr \end{equation} 
As seen earlier,   $p_\vp\in L^2(0,T;H^1)\cap L^\9(0,T;L^2)$ and, by \eqref{e4.16},
$|\wt p_\vp(0)|_k\le C|\r_0|_1,\ k=1,2,\ \ff\vp>0,$  and, therefore, \eqref{e4.30}--\eqref{e4.31} are well defined. However, we cannot get directly the estimate \eqref{e4.28a} from \eqref{e4.30}--\eqref{e4.31}   because {\it a priori} we do not know whether $\wt p_\vp\in L^\alpha(Q_T)$ or $\nabla \wt p_\vp\in L^\alpha(0,T;(L^\alpha)^d)=(L^\alpha(Q_T))^d$. To show this, we shall employ a fixed point argument.   Namely, we write \eqref{e4.31}~as 
\begin{equation}\label{e4.33}
\nabla\wt p_\vp=\Gamma(\nabla\wt p_\vp)+w_\vp\mbox{\ \ in }Q_T,\end{equation}
where 
\begin{equation}\label{e4.34}
\barr{lcl}
 w_\vp(t,x)&=&\dd\int_\rrd
\nabla_x E(t,x-\xi)  \wt p_\vp(0,\xi)d\xi\vsp
&&+\dd\int^t_0  \int_\rrd \nabla_x E(t-s,x-\xi)  (h_\vp(s,\xi)\vsp
&&+\vf(\xi)(|\wt u^*(s,\xi)|^2-|\wt u_\vp(s,\xi)|^2))dsd\xi,\ (t,x)\in Q_T,\earr 
\end{equation}
and $\Gamma:(L^2(Q_T))^d\to (L^2(Q_T))^d$ is defined by
\begin{equation}\label{e4.35}
\barr{ll}
\Gamma(z)(t,x)\!\!\!&=\dd\int^t_0  \int_\rrd \nabla_x E(t-s,x-\xi) (H(T-s,\xi,z(s,\xi))\vsp 
&+\,\vf(\xi)(\wt u^*(s,\xi)-\wt u_\vp(s,\xi)))d\xi ds,\,\ff z\in (L^2(Q_T))^d.\earr
 \end{equation}
By \eqref{e2.16} we have 
\begin{equation}\label{e4.36}
|w_\vp(t)|_\alpha\!\!\!
\le Ct^{\frac d2\(\frac1\alpha-1\)}|\wt p_\vp(0)|_1+C\dd\int^t_0(t-s)^{\frac d2\(\frac1\alpha-1\)-\frac12}
(|h_\vp(s)|_1+1)ds. \end{equation}
Recalling that $|\wt p_\vp(0)|_1\le C|\r_0|_1$ and, by Lemma \ref{l1}, $\{h_\vp\}$ is bounded in $L^1(Q_T)$, we get 
by \eqref{e4.36} that
\begin{equation}\label{e4.37}
\|w_\vp\|_{L^\alpha(Q_T)}=\(\int^T_0|w_\vp(t)|^\alpha_\alpha dx\)^{\frac1\alpha}\le C|\r_0|_1T^{\frac1{2\alpha}(d+\alpha(1-d))}.\end{equation}
Since, by \eqref{e3.12}, $q\to H(\cdot,q)$ is Lipschitz continuous, we get by \eqref{e4.35} via equality \eqref{e2.16}
$$|\Gamma(z)(t)-\Gamma(\bar z)(t)|_\alpha
\le C\int^t_0(t-s)^{-\frac12}|z(s)-\bar z(s)|_\alpha ds,\ \ff t\in(0,T),$$
and this yields via Young's inequality
\begin{equation}\label{e4.38} 
\|\Gamma(z)-\Gamma(\bar z)\|_{L^\alpha(Q_T)}
\le C T^{\frac12}\|z-\bar z\|_{L^\alpha(Q_T)},\ \ff z,\bar z\in (L^\alpha(Q_T))^d.\end{equation}
(Here and everywhere in the following, we shall denote by $C$ several positive constants independent of $\vp,$ $T$.) Hence, for $0<T<T^*=C^{-2}$, the equation $z=\Gamma(z)+w_\vp$ has a unique solution $z_\vp\in (L^\alpha(Q_T))^d$ which, by virtue of \eqref{e4.33}, is just $\nabla\wt p_\vp$. Moreover, by \eqref{e4.37}--\eqref{e4.38} we see that
$$\barr{ll}
\|z_\vp\|_{L^\alpha(Q_T)}\!\!\!
&\le\|\Gamma(z_\vp))\|_{L^\alpha(Q_T)}+\|w_\vp\|_{L^\alpha(Q_T)}\vsp
&\le CT^{\frac12}\|z_\vp\|_{L^2(Q_T)}+C|\r_0|_1 T^{\frac1{2\alpha}}(d+\alpha(1-d)).\earr$$
This yields
\begin{equation}\label{e4.39} 
\|\nabla\wt p_\vp\|_{L^\alpha(Q_T)}
\le C|\r_0|_1 T^{\frac1{2\alpha}(d+\alpha(1-d))}
 (1-CT^{\frac12})\1,\, \ff T\in(0,T^*),\end{equation}
while, by \eqref{e4.30}, we have
\begin{equation}\label{e4.40}
|\wt p_\vp(t)|_\alpha\le C|\wt p_\vp(0)|_1 t^{-\frac 12\(d+1-\frac d\alpha\)}+C\dd\int^t_0(t-s)^{-\frac 12\(d+1-\frac d\alpha\)}|\nabla\wt p_\vp(s)|_\alpha ds,\end{equation}
and so, by \eqref{e4.39} this yields
\begin{equation}\label{e4.41}
\|\wt p_\vp\|_{L^\alpha(Q_T)}\le C|\r_0|_1 T^{\frac1{2\alpha}(d+\alpha(1-d))},\ \ff\vp>0,\ \ff T\in(0,T^*).\end{equation}
Moreover, by \eqref{e4.39} and \eqref{e4.40}, we get also, for $\alpha=1$,
\begin{equation}\label{e4.42}
\barr{ll}
|\wt p_\vp(t)|_1\!\!\!&\le C|\wt p_\vp(0)|_1+C\dd\int^t_0|\nabla\wt p_\vp(s)|_1ds\vsp
&\le C|\r_0|_1(1+T^{\frac12}(1-CT^{\frac12})\1,\ t\in(0,T^*).\earr\end{equation}
Then, iterating the above fixed point argument,  
one infers that \eqref{e4.39}--\eqref{e4.41} hold for all $T>0$, as claimed.  
\end{proof}

\n{\it Proof of Theorem {\rm\ref{t1}} $($continued$)$.} Since $\{p_\vp\}$ and $\{\nabla p_\vp\}$ are bounded in $(L^\alpha(Q_T))^d$, $(L^\alpha(Q_T))^d$, respectively, it follows that $\{H(t,x,\nabla p_\vp)\}$ is bounded in $L^\alpha(Q_T)$ and so, on a subsequence $\{\vp\}\to0$, we have, for any $\alpha\in\left[1,\frac{d}{d-1}\)$,
\begin{equation}\label{e4.43}
\barr{rcll}
p_\vp&\to&p&\mbox{weakly in $L^\alpha(Q_T)$}\vsp 
\nabla p_\vp&\to&\nabla p&\mbox{weakly in $(L^\alpha(Q_T))^d$}\vsp 
H(t,x,\nabla p_\vp)&\to&\zeta^*&\mbox{weakly in $L^\alpha(Q_T)$}.\earr\end{equation}
By Lemma \ref{l2} we have
\begin{equation}\label{e4.44}
\barr{lcll}
g'_\vp(t,x,\r_\vp)&\to&\eta&\mbox{ weakly in }L^1(Q_T),\earr\end{equation}
while, by \eqref{e3.8} and \eqref{e4.8},  it follows that $\{(g_0)'_\vp(\cdot,\r_\vp(T))\}$ is bounded in $L^2$ and so, selecting further a subsequence $\{\vp\}\to0$, we have
\begin{equation}\label{e4.45}
\barr{lcll}
(g_0)'_\vp(\cdot,\r_\vp(T))&\to&\eta_0&\mbox{ weakly in }L^2.\earr\end{equation}
To prove \eqref{e3.22}, we note that

$$\barr{l}
\dd\int_\rrd(g_0)'_\vp(x,\r_\vp(T,x))(\r_\vp(T,x)-v(x))dx\vsp
\qquad\ge\dd\int_\rrd((g_0)_\vp(x,\r_\vp(T,x))-(g_0)_\vp(v(x)))dx,\ \ff v\in L^2,\earr$$
which, by \eqref{e4.28b} yields
$$\barr{l}
\dd\liminf_{\vp\to0}\int_\rrd(g_0)'_\vp(x,\r_\vp(T,x))\r_\vp(T,x)dx\vsp
\quad\ge\dd\int_\rrd\eta_0(x)v(x)dx+\int_\rrd(g_0(x,\r(T,x))-g_0(x,v(x)))dx,\ \ff v\in L^2,\earr$$
and, therefore, $\eta_0(x)\in g'_0(x,\r(T,x)),$ a.e. $x\in\rrd$, as claimed. 
On the other hand, we have
\begin{equation}\label{e4.46}
\lim_{\vp\to0}|\r_\vp(t)-\r^*(t)|_1=0,\ \ \ff t\in[0,T].\end{equation}
Indeed, by \eqref{e4.11} and \eqref{e2.3}   where $u=u^*$, $\rho=\rho^*$, we have
$$\dd\frac\pp{\pp t}(\r_\vp-\r^*)-\nu\Delta(\r_\vp-\r^*)+\divv((u_\vp-u^*)\r^*)+\divv(u_\vp(\r^*-\r_\vp))=0\mbox{ in }Q_T,$$
and by \eqref{e2.16} we get 
$$\barr{ll}
|\r_\vp(t)-\r^*(t)|_1\!\!\!
&\ge\oo|\r^\vp_0-\r_0|_1+\oo\dd\int^t_0(t-s)^{-\frac12}(|u_\vp(s)-u^*(s))\r^*(s)|_1\vsp
&+a|\r_\vp(s)-\r^*(s)|_1)ds,\ \ff t\in[0,T].\earr$$
This yields
$$\|\r_\vp-\r^*\|_{L^1(0,t;L^1)}
\le\oo|\r^\vp_0-\r_0|_1(1-\oo t^{\frac12}\delta(\vp))\1,\ \ff t\in[0,T],$$
where $\delta(\vp)=\|(u_\vp-u^*)\r^*\|_{L^1(0,T;L^1)}.$ Since $\delta(\vp)\to0$ and $|\r^\vp_0-\r_0|_1\to0$ as $\vp\to0$, we get
\begin{equation}\label{e4.53a}
\lim_{\vp\to0}\|\r_\vp-\r^*\|_{L^1(0,t;L^1)}=0,\ \ff t\in(0,(\oo\delta(\vp))^{-2}).
\end{equation}
Taking into account that $|\r_\vp(t)|_1\le|\r^\vp_0|_1\le C|\r_0|_1$, $\ff t\in[0,T]$, we may iterate \eqref{e4.53a} and get so \eqref{e4.46}, as claimed.

Then, by \eqref{e4.45} and \eqref{e4.46} it follows \eqref{e3.33}. We also note that
\begin{equation}\label{e4.47}
u^*(t,x)\in H_q(t,x,\nabla p(t,x)),\ae(t,x)\in Q_T.\end{equation}
Indeed, by \eqref{e4.19} we have
\begin{equation}\label{e4.48}
\barr{l}
\dd\int_{\wt Q}u_\vp\cdot(\nabla p_\vp+\vf(u^*-u_\vp)-\theta)dtdx\\\qquad
\ge\dd\int_{\wt Q}(H(t,x,\nabla p_\vp+\vf(u^*-u_\vp))-H_\vp(t,x,\theta))dtdx,\\\hfill \ff\theta\in(L^\9(\wt Q))^d\cap(L^1(\wt Q))^d,\earr\end{equation}
where $\wt Q\subset Q_T$ is an arbitrary measurable set. 

Since the functional $z\to\int_{\wt Q}H(t,x,z)dxdt$ is weakly lower-semicontinuous in $(L^\alpha(\wt Q))^d$, we have by \eqref{e4.43} and \eqref{e4.19}

$$\liminf_{\vp\to0}\int_{\wt Q}H (t,x,\nabla p_\vp+\vf(u^*-u_\vp))dtdx\ge\int_{\wt Q}H(t,x,\nabla p)dtdx$$and so, letting $\vp\to0$ in \eqref{e4.48}, we get
$$\barr{r}\int_{\wt Q} (u^*-\theta)\cdot\nabla p\,dtdx\ge \int_{\wt Q}(H(t,x,\nabla p)-H(t,x,\theta))dtdx,\vsp \ff\theta\in (L^\9(\wt Q))^d\cap(L^1(\wt Q))^d.\earr$$
Since $\wt Q\subset Q_T$ is arbitrary, this implies \eqref{e4.47}, as claimed.

Let us prove now that
\begin{equation}\label{e4.49}
\zeta^*(t,x)=H(t,x,\nabla p(t,x)),\ae(t,x)\in Q_T.\end{equation}
Since, by \eqref{e3.12}, the function $q\to H(t,x,q)$ is Lipschitz-continuous (uniformly in $(t,x)\in Q_T$, by \eqref{e4.20}, it follows that
\begin{equation}\label{e4.50}
\hspace*{-3mm}\barr{r}
L(t,x,u_\vp(t,x))=u_\vp(t,x)\cdot\nabla p_\vp(t,x)-H(t,x,\nabla p_\vp(t,x))\vsp
\le C\vf(x)|u_\vp(t,x)-u^*(t,x)|,\ae(t,x)\in Q_T.\earr\end{equation}
As $\{H(t,x,\nabla p_\vp)\}$ and $\{\nabla p_\vp\}$ are bounded in $L^\alpha(Q_T)$ and $(L^\alpha(Q_T))^d$, res\-pec\-ti\-vely, we infer that $\{L(t,x,u_\vp(t,x))\}$ is bounded in $L^\alpha(Q_T)$, too. Moreover, by hypothesis (i), \eqref{e3.5}, it follows that the function $u\to L(t,x,u)$ is continuous  on $\{u\in \rrd;\,|u|\le a\}$, a.e. $(t,x)\in Q_T$. Then, by \eqref{e4.27}  it follows~that 
$$\lim_{\vp\to0}L(t,x,u_\vp(t,x))=L(t,x,u^*(t,x)),\ae(t,x)\in Q_T,$$and that $\{L(t,x,u_\vp)\}$ is weakly compact in $L^2(Q_T)$. We have, therefore, for $\vp\to0$, 
$$L(t,x,u_\vp)\to L(t,x,u^*)\mbox{ weakly in }L^2(Q_T).$$\newpage 
\n Hence, by \eqref{e4.43}   it follows that
$$L(t,x,u_\vp)-\nabla p_\vp\cdot u_\vp\to L(t,x,u^*)-\nabla p\cdot u^*\mbox{ weakly in }L^\alpha(Q_T),$$
and so, by \eqref{e4.43} and \eqref{e4.50} it follows that
$$L(t,x,u^*)-\nabla p\cdot u^*=-\zeta^*,\ae\mbox{in }Q_T.$$
Recalling (see\eqref{e2.11}) that \eqref{e4.47} is equivalent to
$$u^*\cdot\nabla p-L(t,x,u^*)=H(t,x,\nabla p),\ae\mbox{in }Q_T),$$we get \eqref{e4.49}, as claimed. 
Finally, letting $\vp\to0$ in the inequality
$$\barr{l}
\dd\int_{\wt Q}g'_\vp(t,x,\r_\vp(t,x))(\r_\vp(t,x)-\theta(t,x))dtdx\vspace*{-2mm}\\\qquad
\ge\dd\int_{\wt Q}(g_\vp(t,x,\r_\vp(t,x))-g_\vp(t,x,\theta(t,x)))dtdx,\ \ff\theta\in L^\9(\wt Q),\earr$$where $\wt Q$ is an arbitrary bounded and Lebesgue measurable set of $Q_T$, we get by \eqref{e4.44} and \eqref{e4.24a} that
$$\int_{\wt Q}\eta(\r^*-\theta)dtdx\ge\int_{\wt Q}(g(t,x,\r^*)-g(t,x,\theta))dtdx,\ \ff\theta\in L^\9(\wt Q),$$
because, as seen earlier, $\r_\vp\to\r^*$ in $L^1(Q_T)$  as $\vp\to0$ and so, by Egorov's theorem, we may choose $\wt Q$ in a such a way that $\r^*\in L^\9(\wt Q)$ and $\r_\vp\to\r^*$ in $L^\9(\wt Q)$. This yields
$$\eta(t,x)\in g_\r(t,x,\r^*(t,x)),\ae(t,x)\in\wt Q,$$and so the latter extends to all of $(t,x)\in Q_T$.
Then, letting $\vp\to0$ in \eqref{e4.11}, \eqref{e4.15}, it follows by \eqref{e4.43}--\eqref{e4.47} that $\r^*$, $p$ and $u^*$ satisfy conditions \eqref{e3.14}--\eqref{e3.22} in Theorem \ref{t1}.\hfill$\Box$

\section{Proof of Theorem \ref{t2}}\label{s5}
\setcounter{equation}{0}

Let $\r_0\in L^1\cap L^{\alpha^*}$ be such that  \eqref{e3.13} holds and let $\r_1,\r_2$ two solutions to system \eqref{e3.27}, which satisfy \eqref{e3.25}--\eqref{e3.28}. Then, we have

\begin{equation}\label{e5.1}
\barr{ll}
\dd\frac\pp{\pp t}\,(\r_1-\r_2)-\nu\Delta(\r_1-\r_2)
+\divv(\r_1\zeta_1-\r_2\zeta_2)=0 \mbox{ in }Q_T,\vsp
\dd\frac\pp{\pp t}\,(p_1-p_2)+\nu\Delta(p_1-p_2)
+H(t,x,\nabla p_1)-H(t,x\nabla p_2)\\
\hfill=\eta_1-\eta_2 \mbox{ in }Q_T,\vsp 
(\r_1-\r_2)(0,x)\equiv0,\   (p_1-p_2)(T,x)\equiv-(\eta^1_0(x)-\eta^2_0(x))\mbox{ in }\rrd,\earr\end{equation}
where

\begin{equation}\label{e5.2}
\barr{r}
\zeta_i\in H_q(t,x,\nabla p_i),\eta_i\in g_\r(t,x,\r_i),\eta^i_0\in(g_0)_\r(x,\r_i(T)),\vsp
 i=1,2,\ae\mbox{in }Q_T,\earr
\end{equation}
\begin{equation}
\zeta_i\in (L^\alpha(Q_T))^d,\ \eta_i\in L^1(Q_T),\ \eta^i_0\in L^2,\ i=1,2.\label{e5.3}
\end{equation}

We set $\r=\r_1-\r_2,\ p=p_1-p_2,\ \eta\equiv\eta_1-\eta_2,\ \eta_0\equiv\eta^1_0-\eta^2_0.$ 

Let $\Phi_\vp=(I-\vp\Delta)\1:L^2\to L^2$, that is, $\Phi_\vp(z)\equiv z_\vp,$ where $z_\vp-\vp\Delta z_\vp=z$ in $\rrd$. 
We note that $\Phi_\vp$ extends to allo $L^q$ and $L(L^q,L^q),\ \ff q\in [1,\9)$, and that
\begin{equation}\label{e5.4}
\lim_{\vp\to0}|\Phi_\vp(z)-z|^2_2=0.\end{equation}
Applying the operator $\Phi_\vp$ to both equations in \eqref{e5.1}, we get
\begin{equation}\label{e5.5}
\hspace*{-7mm}\barr{l}
\dd\frac\pp{\pp t}\,\Phi_\vp(\r)-\nu\Delta\Phi_\vp(\r)+\divv(\Phi_\vp(\r_1\zeta_1-\r_2\zeta_2))=0 \mbox{ in }Q_T,\vsp
\dd\frac\pp{\pp t}\,\Phi_\vp(p)+\nu\Delta\Phi_\vp(p)+\Phi_\vp(H(t,x,\nabla p_1)-H(t,x\nabla p_2))=\Phi_\vp(\eta) \mbox{ in }Q_T,\vsp 
\Phi_\vp(\r)(0)\equiv0,\ \ \Phi_\vp(p)(T)\equiv-\Phi_\vp(\eta_0) \mbox{ in }\rrd.\earr\hspace*{-7mm}\end{equation}
Since $\r_i(0)\in L^{\alpha^*}$, we have by Proposition \ref{p1} that $\r_i\in L^{\alpha^*}(Q_T)$, $i=1,2$, and since $\nabla p_i\in (L^\alpha(Q_T))^d$, $i=1,2$, it follows that
\begin{equation}\label{e5.6}
\r_i\nabla p_j\in L^1(Q_T),\ \ff i,j=1,2.\end{equation}
Moreover, by \eqref{e3.2a} and  \eqref{e3.26} we have
$$0\le\eta\le C(|\r|^{\alpha^*-1}+1),\ae\mbox{in }Q_T,$$and so $\eta\r\in L^1(Q_T)$. We also note that
\begin{equation}\label{e5.7}
\Phi_\vp(\r)\in L^{\alpha^*}(0,T;W^{2,\alpha^*}),\ \Phi_\vp(p)\in L^\alpha(0,T;W^{2,\alpha})\end{equation}
and by \eqref{e5.5}
$$\frac d{dt}\,\Phi_\vp(\r)\in L^{\alpha^*}(Q_T).$$
This implies that, for each $\vp>0,$ $t\to(\Phi_\vp(\r(t),p(t))_2$ is absolutely continuous on $[0,T]$ and so \eqref{e5.5} yields
\begin{equation}\label{e5.8}
\hspace*{-7mm}\barr{l}
(\Phi_\vp(\r(T)),p(T))_2-(\Phi_\vp(\r_0),p(0))_2
-\dd\int_{Q_T}(\r_1\zeta_1-\r_2\zeta_2)\cdot\nabla\Phi_\vp(\r)dtdx\vsp\qquad+\dd\int_{Q_T}\Phi_\vp(\r)(H(t,x,\nabla p_1)-H(t,x,\nabla p_2))dtdx=\dd\int_{Q_T}\Phi_\vp(\r)\eta\,dtdx.
\earr\hspace*{-7mm}\end{equation}
We have, for $\vp\to0$,
$$\barr{rcll}
\Phi_\vp(\r)&\to&\r&\mbox{in }L^{\alpha^*}(Q_T),\earr\ \ \ $$
and, since $\{\nabla\Phi_\vp(\rho)=\Phi_\vp(\nabla\rho)\}$ is bounded in $(L^\alpha(Q_d)^d$, we also have
$$\barr{rcll}
\nabla\Phi_\vp(p)&\to&\nabla p&\mbox{in }(L^\alpha(Q_T))^d.\earr$$
Then, letting $\vp\to0$ in \eqref{e5.8}, we get
\begin{equation}\label{e5.9}
\barr{l}
(\r(T),\eta_0)_2-\dd\int_{Q_T}(\r_1\zeta_1-\r_2\zeta_2)\cdot\nabla p\ dtdx\vsp
\qquad+\dd\int_{Q_T}\r(H(t,x,p_1)-H(t,x,\nabla p_2))dtdx =\int_{Q_T}\r\eta\ dtdx.\earr
\end{equation}
Taking into account that by \eqref{e5.2} we have
$$\zeta_i\cdot(\nabla p_i-z)\ge H(t,x,\nabla p_i)-H(t,x,z),\ \ff z\in\rr,$$and   that $\r_i\ge0$, we get by \eqref{e5.9} that
$$\int_{Q_T}\r\eta\ dtdx\le0.$$Hence, $\r\eta=0$,\ae in $Q_T$. Since $q\to g_\r(t,x,q)$ is strictly montone,  $\r=\r_1-\r_2$ and $\eta\in\pp_g(\r_1)-\pp g(\r_2)$, we get that $\r_1-\r_2=0$,\ae in $Q_T$, as claimed.\hfill$\Box$


\end{document}